\declaretheoremstyle[headfont=\normalsize\normalfont\bfseries,notefont=\mdseries, notebraces={(}{)},bodyfont=\normalfont,postheadspace=0.5em]{basicstyle}
\declaretheoremstyle[headfont=\normalsize\normalfont\bfseries,notefont=\mdseries,
notebraces={(}{)},bodyfont=\normalfont\itshape,postheadspace=0.5em]{italstyle}
\declaretheorem[style=italstyle,name=Theorem,numberwithin=section]{theorem}
\declaretheorem[style=italstyle,name=Proposition,sibling=theorem]{prop}
\declaretheorem[style=italstyle,name=Lemma,sibling=theorem]{lemma}
\renewenvironment{proof}{\preproof}{\endpreproof}
\newcommand{\abs}[1]{\left|#1\right|}
\newcommand{\bd}{\partial}
\newcommand{\C}{\mathbb{C}}
\newcommand{\dist}{\mathrm{dist}}
\renewcommand{\d}{\mathrm{d}}
\newcommand{\id}{\mathrm{id}}
\newcommand{\norm}[1]{\left\lVert#1\right\rVert}
\newcommand{\pd}[2]{\frac{\partial #1}{\partial #2}}
\newcommand{\R}{\mathbb{R}}
\renewcommand\section{\@startsection{section}{1}{0pt}{-3.5ex \@plus -1ex \@minus -.2ex}{2.3ex \@plus.2ex}{\centering\itshape}}
\newcommand{\set}[1]{\left\{#1\right\}}
\renewcommand{\subsection}{\@startsection{subsection}{2}\z@{.5\linespacing\@plus.7\linespacing}{-.5em}{\normalfont\itshape}}
\newcommand{\Z}{\mathbb{Z}}
\title[A Symplectic Cohomology Barcode For Contactomorphisms]{Shelukhin's Hofer distance and a symplectic cohomology barcode for contactomorphisms}
\author{Dylan Cant}
\begin{document}
\maketitle

\begin{abstract}
  This paper constructs a persistence module of Floer cohomology groups associated to a contact isotopy of the ideal boundary of a Liouville manifold. The barcode (or, bottleneck) distance between the persistence modules is bounded from above by Shelukhin's Hofer distance. Moreover, the barcode is supported (i.e., has spectrum) on the lengths of translated points of the contactomorphism. We use this structure to prove various existence results for translated points and to construct spectral invariants for contactomorphisms which are monotone with respect to positive paths and continuous with respect to Shelukhin's Hofer distance. While this paper was nearing completion, the author was made aware of similar upcoming work by Djordjevi\'c, Uljarevi\'c, Zhang.
\end{abstract}

\section{Introduction}
\label{sec:introduction}
Let $(W,\lambda)$ be a Liouville manifold and let $Y$ denote its ideal contact boundary. The main results in this paper concern the relationship between a persistence module of symplectic cohomology groups associated to a contactomorphism $\varphi$ of $Y$, the translated points of $\varphi$, and Shelukhin's Hofer distance for contactomorphisms from \cite{shelukhin_contactomorphism}.

Persistence modules were introduced into symplectic topology in \cite{polterovich_shelukhin_persistence_1}; the reader is referred to \cite{chazal_et_al_structure_stability_pmod_book,usher_zhang,vukasin_zhang,persistence_book,kislev_shelukhin,shelukhin_viterbo,shelukhin_zoll,cineli_ginzburg_gurel_entropy,ginzburg_gurel_mazzucchelli_entropy,cant_oscillation_energy,fender-lee-sohn-entropy} for more details on persistence modules, barcodes, and their role in symplectic topology.

Symplectic cohomology is a well-studied invariant of Liouville domains built out of the Floer cohomology groups of certain Hamiltonian systems, see \cite{viterbo_functors_and_computations_1,seidel-biased,ritter_TQFT,uljarevic_floer_homology_domains,merry_ulja,vukasin_zhang,uljarevic_ssh,ulja_zhang,ulja_drobnjak,shelukhin_viterbo,shelukhin_zoll,PA_spectral_diameter,fender-lee-sohn-entropy}.

Translated points are a generalization of fixed points for contactomorphisms defined in \cite{sandon_12,sandon_13} (as a special kind of leafwise intersection point); see \cite{albers_merry,shelukhin_contactomorphism, AFM15,meiwes_naef,oh_legendrian_entanglement,oh_shelukhin_anti_ci,allais_zoll,cant_sandon_conj} for research on translated points.

It is well-known that symplectic cohomology is closely related to \emph{Rabinowitz Floer homology} (RFH), see \cite{cieliebak_frauenfelder,cieliebak_frauenfelder_oancea} and in particular \cite{albers_frauenfelder_nonlinear_maslov,albers_merry,albers_merry_orderability_non_squeezing} which associate algebraic invariants to contactomorphisms using RFH. Related invariants for Legendrians involving barcodes and the Shelukhin-Hofer norm can be found in \cite{rizell_sullivan_persistence_1,rizell_sullivan_augmentation}. The work of \cite{oh_legendrian_entanglement,oh_shelukhin_anti_ci,oh_yu_1,oh_yu_2} defines algebraic invariants for contactomorphisms and Legendrians using the framework of \emph{contact instantons}. Work in progress \cite{albers_shelukhin_zapolsky} defines similar invariants for prequantization spaces (without requiring a Liouville filling!), using the technology of Lagrangian Floer cohomology.

The structures introduced in this paper are fairly close to those in \cite{djordjevic_uljarevic_zhang}; however, the results were reached independently and are different in certain ways.

\subsection{Definition of terms and statement of results}
\label{sec:definition-terms}

\subsubsection{Group of contactomorphisms}
\label{sec:group-contactomorphisms}

Let $\Gamma$ be the identity component in the group of contactomorphisms of $Y$. We will frequently consider families $\varphi_{t}\in \Gamma$, $t\in [0,1]$, satisfying $\varphi_{0}=1$, and we implicitly require the extension to $\R$ given by $\varphi_{t+1}=\varphi_{t}\varphi_{1}$ to be smooth. We refer to such a family as a \emph{system} or as a \emph{contact isotopy}. The space of such systems up to homotopy is a model for the universal cover of $\Gamma$.

\subsubsection{Discriminant points, Reeb flow, and translated points}
\label{sec:discriminant-points}

A \emph{discriminant point} of $\varphi\in \Gamma$ is a point $y\in Y$ so that $\varphi(y)=y$ and $(\varphi^{*}\alpha)_{y}=\alpha_{y}$ for any contact form $\alpha$; see \cite{givental_quasimorphism,chekanovQF,albers_frauenfelder_nonlinear_maslov,colin_sandon,gkps}. We denote by $\Gamma^{\times}\subset \Gamma$ the subset of contactomorphisms without discriminant points. The complement $\Gamma\setminus \Gamma^{\times}$ is called the \emph{discriminant locus}.

Given a contact form $\alpha$ on $Y$ let $R_{s}=R^{\alpha}_{s}$ denote the corresponding Reeb flow by time $s$. For $\varphi\in \Gamma$, introduce the \emph{spectrum} $\mathrm{Spec}_{\alpha}(\varphi)$ as the set of numbers $s\in \R$ so that $\varphi^{-1}\circ R_{s}$ has a discriminant point.

A \emph{translated point of length $s$} for $(\varphi,\alpha)$ is a discriminant point for $\varphi^{-1}\circ R_{s}$. Thus the spectrum is the set of lengths of translated points. It should be noted that our convention is that a translated point is a pair $(s,y)\in \R\times Y$. This convention differs slightly from, e.g., \cite{shelukhin_contactomorphism, merry_ulja}. If two translated points $(s,y)$ and $(s',y')$ have $y'\ne y$, they are said to be \emph{geometrically distinct}.

\subsubsection{Persistence module associated to a contactomorphism}
\label{sec:persistence-module-intro}

As in \cite{uljarevic_floer_homology_domains,merry_ulja,ulja_zhang,uljarevic_ssh,ulja_drobnjak}, to every system $\varphi_{t}$ with $\varphi_{1}\in \Gamma^{\times}$ there is an associated Floer cohomology group $\mathrm{HF}(\varphi_{t})$, defined as the Floer cohomology of any Hamiltonian system on $W$ whose ideal restriction agrees with $\varphi_{t}$; we give the construction in \S\ref{sec:sympl-cohom-pers}.

For each $s\not\in \mathrm{Spec}_{\alpha}(\varphi)$ let: $$V_{\alpha,s}(\varphi):=\mathrm{HF}(\varphi_{t}^{-1}\circ R_{st}),$$ and, for $s\le s'$, define continuation maps $\mathfrak{c}_{s,s'}:V_{\alpha,s}(\varphi)\to V_{\alpha,s'}(\varphi)$ by counting continuation cylinders, similarly to, e.g., \cite{ulja_zhang}; the precise details are given in \S\ref{sec:cont-cylind}. The data $(V_{\alpha}(\varphi),\mathfrak{c})$ forms a \emph{persistence module}, and hence has an associated \emph{barcode} $\mathfrak{B}_{\alpha}(\varphi)$. Lemma \ref{lemma:endpoint-spectrum} below implies the endpoints of bars lie in $\mathrm{Spec}_{\alpha}(\varphi)$. The persistence module depends only on the image $(\varphi_{1},[\varphi_{t}])$ in the universal cover, up to isomorphism.

In the case $\varphi_{t}=\id$, the barcode $\mathfrak{B}_{\alpha}(\id)$ is a known object; indeed, the boundary depth of the unit element appears in, e.g., \cite{benedetti-kang}; see also \cite[\S3]{vukasin_zhang} and \cite{fender-lee-sohn-entropy} for a related barcode for Reeb flows is defined using action values as the persistence parameter.

The colimit of $V_{\alpha,s}(\varphi)$ as $s\to\infty$ is independent of both $\varphi$ and $\alpha$ and is called the \emph{symplectic cohomology} of $W$. See \cite{seidel-biased,ritter_TQFT,PA_spectral_diameter} for a nice survey of this invariant.

\subsubsection{Bounding the barcode distance in terms of Shelukhin's Hofer distance}
\label{sec:stat-main-result}
Our first main result is:
\begin{theorem}\label{theorem:main}
  The barcode distance between $\mathfrak{B}_{\alpha}(\varphi_{1})$ and $\mathfrak{B}_{\alpha}(\varphi_{0})$ is bounded from above by Shelukhin's Hofer distance $\mathrm{dist}_{\alpha}(\varphi_{0},\varphi_{1})$, where the distance is measured in the universal cover.
\end{theorem}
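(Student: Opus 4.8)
The plan is to bound the barcode distance by the interleaving distance between the two persistence modules, and to build the interleaving from Floer continuation maps. Fix $\epsilon>0$ and set $E=\mathrm{dist}_{\alpha}(\varphi_{0},\varphi_{1})+\epsilon$. By the definition of Shelukhin's Hofer distance, choose a contact isotopy $\set{\varphi_{u}}_{u\in[0,1]}$ from $\varphi_{0}$ to $\varphi_{1}$, compatible with the paths exhibiting $\varphi_{0},\varphi_{1}$ as elements of $\Gamma$, whose Shelukhin--Hofer length is $<E$. Fix $s\notin\mathrm{Spec}_{\alpha}(\varphi_{0})\cup\mathrm{Spec}_{\alpha}(\varphi_{1})$; this suffices, since the structure maps $\mathfrak{c}$ extend both persistence modules across the spectra without changing their barcodes. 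The point is that the path
\[
u\longmapsto \varphi_{u}^{-1}\circ R_{s+uE}\qquad(u\in[0,1])
\]
runs from $\varphi_{0}^{-1}R_{s}$ to $\varphi_{1}^{-1}R_{s+E}$, and inserting the Reeb flow $R_{uE}$ --- whose contact Hamiltonian with respect to $\alpha$ is the constant rate $E$ --- makes this path \emph{positive}, precisely because $E$ exceeds the Shelukhin--Hofer length of $\set{\varphi_{u}}$. A positive path promotes, as in \S\ref{sec:cont-cylind}, to a monotone homotopy of Hamiltonian systems on $W$ from one with ideal restriction $\varphi_{0}^{-1}R_{s}$ to one with ideal restriction $\varphi_{1}^{-1}R_{s+E}$; counting the associated continuation cylinders defines
\[
f_{s}\colon V_{\alpha,s}(\varphi_{0})\longrightarrow V_{\alpha,s+E}(\varphi_{1}),
\]
and running $\set{\varphi_{u}}$ backwards produces $g_{s}\colon V_{\alpha,s}(\varphi_{1})\to V_{\alpha,s+E}(\varphi_{0})$ in the same way.

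Next I would check that $f=\set{f_{s}}$ and $g=\set{g_{s}}$ are morphisms of persistence modules, i.e.\ commute with the structure maps $\mathfrak{c}$. This is the usual functoriality of continuation maps: $\mathfrak{c}_{s,s'}$ is itself the continuation map of the positive path $u\mapsto\varphi^{-1}R_{s+u(s'-s)}$; concatenation of positive paths corresponds to composition of continuation maps; and a homotopy rel endpoints through positive paths induces equality of the associated maps. The naturality squares then commute because the relevant concatenations of positive paths are homotopic rel endpoints in $\Gamma$.

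Then I would verify the interleaving relations $g_{s+E}\circ f_{s}=\mathfrak{c}_{s,s+2E}$ and $f_{s+E}\circ g_{s}=\mathfrak{c}_{s,s+2E}$. By functoriality the left-hand side of the first is the continuation map of the concatenation of the path defining $f_{s}$ with the path defining $g_{s+E}$; as a path in $\Gamma$ this concatenation is homotopic rel endpoints --- through positive paths, again using $E>$ (Hofer length) --- to the pure Reeb path $u\mapsto\varphi_{0}^{-1}R_{s+2uE}$, since the out-and-back excursion in the $\varphi_{u}$ direction is contractible and the homotopy classes in $\Gamma$ match by the compatibility of $\set{\varphi_{u}}$ with the chosen path data. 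The continuation map of the pure Reeb path is $\mathfrak{c}_{s,s+2E}$ by the previous paragraph, and the second relation is symmetric. Hence $(f,g)$ is an $E$-interleaving of $V_{\alpha}(\varphi_{0})$ and $V_{\alpha}(\varphi_{1})$, so by the algebraic stability theorem \cite{chazal_et_al_structure_stability_pmod_book} the barcode distance is at most $E=\mathrm{dist}_{\alpha}(\varphi_{0},\varphi_{1})+\epsilon$; letting $\epsilon\to0$ finishes the proof.

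The main obstacle is the Floer-analytic content hidden in the first step: that the continuation cylinders for the monotone homotopy attached to $u\mapsto\varphi_{u}^{-1}R_{s+uE}$ form a compact manifold of the expected dimension, so that $f_{s}$ is a well-defined chain map. This rests on an a priori estimate confining Floer solutions to a compact part of $W$, and it is exactly here that positivity is used; one must verify that ``$E$ exceeds the Shelukhin--Hofer length of $\set{\varphi_{u}}$'', rather than some weaker inequality involving the conformal factors of the $\varphi_{u}$, is what guarantees monotonicity --- i.e.\ that Shelukhin's normalization of the Hofer length is tuned so that no loss occurs in the interleaving constant. A softer but still necessary point is pinning down the homotopy classes in the universal cover $\Gamma$ used in the last two steps, where one invokes that $\mathrm{dist}_{\alpha}$ is defined through isotopies compatible with the chosen path data.
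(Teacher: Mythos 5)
Your proposal follows the paper's proof essentially verbatim in structure: construct a $\delta$-interleaving for every $\delta$ exceeding $\dist_{\alpha}(\varphi_{0},\varphi_{1})$ by running continuation maps along the positive paths $u\mapsto\varphi_{u}^{-1}\circ R_{s+u\delta}$ and $u\mapsto\varphi_{1-u}^{-1}\circ R_{s+u\delta}$, verify the interleaving identities by homotoping concatenations through non-negative paths rel endpoints, and then invoke the isometry/stability theorem. Two points deserve attention.

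First, the step you flag at the end is a genuine (though standard and easily repaired) gap as written: pointwise positivity of $u\mapsto\varphi_{u}^{-1}\circ R_{s+uE}$ requires $E>\max_{u,y}\abs{h_{u}(y)}$, an $L^{\infty}$-in-time bound, whereas the Shelukhin--Hofer length only controls $\int_{0}^{1}\max_{y}\abs{h_{u}}\,\d u$. So ``$E$ exceeds the Hofer length'' does not by itself make the path positive --- an isotopy that moves very fast on a short time interval is a counterexample. The fix is to reparametrize the isotopy in $u$ so that $\max_{y}\abs{h_{u}(y)}$ is (nearly) constant in $u$, which makes the sup norm arbitrarily close to the integral; this is exactly what the paper does, citing the reparametrization lemma of Nakamura. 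Second, your verification of the naturality squares and of $g_{s+E}\circ f_{s}=\mathfrak{c}_{s,s+2E}$ asserts that the relevant concatenations are homotopic rel endpoints \emph{through non-negative paths} because ``the out-and-back excursion is contractible.'' Contractibility in $\Gamma$ is not the issue; the issue is exhibiting a homotopy every stage of which is non-negative, since only such homotopies leave the continuation map unchanged. The paper does this by writing each intermediate path as a product of two factors that are separately non-negative (e.g.\ $\varphi_{1}^{-1}R_{\eta\beta(\cdot)(s_{1}-s_{0})}\varphi_{1}$ times a second non-negative factor); some such explicit bookkeeping is needed to complete your argument. With these two points filled in, your proof is the paper's proof.
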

The key step in the proof is to construct an \emph{interleaving} between the persistence modules $V_{\alpha}(\varphi_{0})$ and $V_{\alpha}(\varphi_{1})$ and then apply the famous isometry theorem relating the interleaving distance with the barcode distance; see \S\ref{sec:barc-dist-funct}, \S\ref{sec:interleavings-isometry}.

References on this isometry theorem from the perspective of topological data analysis are \cite{cohen_steiner_edelsbrunner_harer, chazal_cohen_steiner_glisse_guibas_oudot_2009,bauer_lesnick,chazal_et_al_structure_stability_pmod_book}.

Shelukhin's Hofer distance and the barcode distance function are recalled in \S\ref{sec:shel-hofer-dist}, \S\ref{sec:barc-dist-funct}, respectively. The proof of Theorem \ref{theorem:main} is completed in \S\ref{sec:interl-two-pers}.

\subsubsection{Endpoints of bars lie in the spectrum}
\label{sec:bound-numb-transl}
An important technical lemma for our applications is:
\begin{lemma}\label{lemma:endpoint-spectrum}
  Suppose that $[s_{0},s_{1}]$ is disjoint from $\mathrm{Spec}_{\alpha}(\varphi)$. Then no bars in $\mathfrak{B}_{\alpha}(\varphi)$ have endpoints in $[s_{0},s_{1}]$.
\end{lemma}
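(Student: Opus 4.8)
The plan is to show that when $[s_0,s_1]$ avoids the spectrum, the continuation maps $\mathfrak{c}_{s,s'}\colon V_{\alpha,s}(\varphi)\to V_{\alpha,s'}(\varphi)$ are isomorphisms for all $s_0\le s\le s'\le s_1$; a persistence module which is constant (up to canonical isomorphism) on an interval can have no bar endpoints strictly inside that interval, which is exactly the conclusion. So the entire content is the claim: \emph{if $[s,s']$ is disjoint from $\mathrm{Spec}_\alpha(\varphi)$ then $\mathfrak{c}_{s,s'}$ is an isomorphism.} I would first reduce to a local statement by a continuity/compactness argument: the set $\R\setminus\mathrm{Spec}_\alpha(\varphi)$ is open (this should follow from the openness of $\Gamma^\times$ inside $\Gamma$, together with the fact that $s\mapsto \varphi^{-1}\circ R_s$ is a continuous path in $\Gamma$), so it suffices to show that for each $s\in[s_0,s_1]$ there is $\varepsilon>0$ with $\mathfrak{c}_{s-\varepsilon,s+\varepsilon}$ an isomorphism, and then compose finitely many such isomorphisms using the functoriality $\mathfrak{c}_{s,s''}=\mathfrak{c}_{s',s''}\circ\mathfrak{c}_{s,s'}$ of the continuation maps.

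Next I would prove the local statement. The key point is that if $[s,s']$ is a short interval disjoint from the spectrum, then the Hamiltonian systems representing $\varphi^{-1}\circ R_s$ and $\varphi^{-1}\circ R_{s'}$ can be connected by a path of \emph{admissible} Hamiltonian systems (remaining in $\Gamma^\times$, i.e. with no discriminant points), and along this path no $1$-periodic orbits — equivalently no discriminant points of the time-one maps — cross the relevant action window, so the Floer complexes are identified. Concretely, one arranges the continuation data so that the count of continuation cylinders is a bijection on generators: because there are no translated points of length in $[s,s']$, the continuation cylinders cannot ``break'' and the usual bifurcation/cobordism argument shows $\mathfrak{c}_{s,s'}$ and the reverse continuation map $\mathfrak{c}_{s',s}$ are mutually inverse. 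This is the standard mechanism by which continuation maps on the complement of the action/period spectrum are isomorphisms; the details will mirror the continuation-map arguments in the references cited for the construction in \S\ref{sec:cont-cylind} (e.g.\ \cite{ulja_zhang}).

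The main obstacle is making precise the assertion that ``no discriminant points cross'' controls the Floer theory on the nose. One has to be careful that the relevant Floer complex for $\varphi^{-1}\circ R_s$ — which, as a Hamiltonian system on the filling $W$, involves a choice of extension over $W$ — does not acquire or lose generators as $s$ varies in $[s_0,s_1]$; this requires an a priori confinement (maximum principle) for Floer and continuation cylinders to a compact region where the relevant periodic orbits are exactly the discriminant points of $\varphi^{-1}\circ R_s$, together with a uniform energy bound so that no families of orbits escape to infinity. Once the confinement and the absence of discriminant points along the path are in hand, the invariance of Floer cohomology under homotopies through admissible data gives the isomorphism. I expect the confinement estimate to be the technical heart and to be handled by the same integrated-maximum-principle techniques used elsewhere in the paper for the construction of $\mathrm{HF}(\varphi)$ and the continuation maps.
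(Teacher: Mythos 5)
Your overall reduction is the same as the paper's: the content of the lemma is exactly that the continuation map $\mathfrak{c}_{s,s'}$ is an isomorphism whenever $[s,s']$ is disjoint from $\mathrm{Spec}_{\alpha}(\varphi)$, and once that is known the normal-form theorem forbids endpoints of bars in the interval. The gap is in your mechanism for proving the local isomorphism. You propose to show that $\mathfrak{c}_{s,s'}$ and a ``reverse continuation map'' $\mathfrak{c}_{s',s}$ are mutually inverse by the usual bifurcation/cobordism argument. But in the Liouville, contact-at-infinity setting continuation maps only exist for \emph{non-negative} paths (see \S\ref{sec:defin-cont-map} and \S\ref{sec:non-negative-paths}): the a priori energy bound comes from the sign of the contact Hamiltonian at infinity, and the path that decreases the Reeb parameter from $s'$ to $s$ is negative at infinity, so there is no a priori energy bound and hence no reverse continuation map. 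The confinement/maximum principle you invoke to control generators is itself downstream of that energy bound, so appealing to it for the reverse direction is circular. This asymmetry is precisely why the lemma needs a non-generic argument.

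The paper's actual proof (following \cite{ulja_zhang}) sidesteps the reverse map entirely. One interpolates the Hamiltonian in the \emph{radial} coordinate rather than in the cylinder coordinate: with $H_{s,t}$ generating the positive path $\psi_{s}=\varphi^{-1}\circ R_{s_{0}+s\delta}$, consider the system $\Psi$ generated by $H_{\beta(r),t}$ with $\beta=0$ for $r\le r_{0}$ and $\beta=1$ for $r\ge r_{1}$. The absence of discriminant points along the path (i.e., of translated points of length in $[s_{0},s_{0}+\delta]$) forces $\Psi$, for $r_{0}$ and $r_{1}/r_{0}$ large, to have no fixed points in the interpolation region, so the Floer complex of $\Psi$ is \emph{literally the same} as that of the system agreeing with $\psi_{0}$, and the continuation map is the identity at chain level. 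If you want to salvage your route, you would need to replace the ``mutually inverse continuation maps'' step with this (or an equivalent) one-sided argument showing the forward continuation map is already an isomorphism.
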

This follows from \cite{ulja_zhang}; the argument is briefly recalled in \S\ref{sec:proof-lemma-endpoint-spectrum}.

\subsection{Consequences of the main results}
\label{sec:cons-main-results}

We digress for a moment on some applications of our main theorem.

\subsubsection{A vanishing result for full symplectic cohomology}
\label{sec:vanish-result-full}

If $\varphi$ has no translated points for some contact form $\alpha$, then Theorem \ref{theorem:main} implies $\mathfrak{B}_{\alpha}(\varphi)$ is empty. Indeed, in this case, the spectrum $\mathrm{Spec}_{\alpha}(\varphi)$ is empty and therefore there can be no endpoints of bars. Consequently, $\mathfrak{B}_{\alpha}(\id)$ has only finite bars or some number of bars $(-\infty,\infty)$.

Following ideas of \cite{ritter_TQFT}, we explain in \S\ref{sec:no-half-infinite-bars} why the absence of half-infinite bars in $\mathfrak{B}_{\alpha}(\id)$ implies the full symplectic cohomology vanishes (and hence there are no infinite bars at all); briefly, the reason is that the unit for the pair-of-pants product always represents a bar starting at $s=0$, and this bar is finite if and only if $\mathrm{SH}^{*}(W)$ is zero. Consequently, Theorem \ref{theorem:main} implies that the full symplectic cohomology vanishes whenever $Y$ has a contactomorphism without translated points, i.e.,
\begin{prop}
  If $Y$ is the ideal boundary of $W$ and the full symplectic cohomology of $W$ is non-zero, then every $\varphi\in \Gamma$ has at least one translated point.
\end{prop}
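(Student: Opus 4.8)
The plan is to prove the contrapositive: assuming some $\varphi\in\Gamma$ has no translated points for a given contact form $\alpha$, we deduce that $\mathrm{SH}^{*}(W)=0$. This is exactly the chain of implications sketched in \S\ref{sec:vanish-result-full}, so I would organize the argument as a sequence of three reductions. First, if $\varphi$ has no translated points of any length $s\in\R$, then by definition $\mathrm{Spec}_{\alpha}(\varphi)=\emptyset$, so in particular $\varphi\in\Gamma^{\times}$ and the persistence module $V_{\alpha}(\varphi)$ is defined for all $s\in\R$. Second, apply Theorem~\ref{theorem:main} with $\varphi_{0}=\id$ and $\varphi_{1}=\varphi$: the barcode distance between $\mathfrak{B}_{\alpha}(\id)$ and $\mathfrak{B}_{\alpha}(\varphi)$ is finite (bounded by $\mathrm{dist}_{\alpha}(\id,\varphi)<\infty$). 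Now invoke Lemma~\ref{lemma:endpoint-spectrum}: since $\mathrm{Spec}_{\alpha}(\varphi)=\emptyset$, every interval $[s_{0},s_{1}]\subset\R$ is disjoint from the spectrum, so $\mathfrak{B}_{\alpha}(\varphi)$ has no bar with a finite endpoint; hence $\mathfrak{B}_{\alpha}(\varphi)$ consists only of some number of copies of the full bar $(-\infty,+\infty)$.

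The key quantitative step is then to rule out such infinite bars using the finiteness of the barcode distance to $\mathfrak{B}_{\alpha}(\id)$. Here I would note that $\mathfrak{B}_{\alpha}(\id)$ always contains at least one half-infinite or finite bar starting at $s=0$ coming from the unit of the pair-of-pants product (the class $1_{W}\in\mathrm{HF}(R_{\epsilon})$ for small $\epsilon>0$, which is nonzero since $\mathrm{HF}(R_{\epsilon})\cong H^{*}(W)$); this is the content of the discussion following the Proposition in \S\ref{sec:no-half-infinite-bars}, whose details I would assume. The crucial dichotomy, following \cite{ritter_TQFT}, is: the bar generated by this unit is infinite (i.e.\ of the form $(a,+\infty)$ or $(-\infty,+\infty)$) if and only if $\mathrm{SH}^{*}(W)\ne 0$, because the colimit $\varinjlim_{s\to\infty}V_{\alpha,s}(\id)=\mathrm{SH}^{*}(W)$ and the unit survives to the colimit precisely when $\mathrm{SH}^{*}(W)$ is nontrivial (the unit is the image of $1\in\mathrm{SH}^{*}(W)$ under the canonical map, and it is nonzero iff $\mathrm{SH}^{*}(W)\ne 0$ by the ring structure).

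Putting these together: if $\mathrm{SH}^{*}(W)\ne 0$, then $\mathfrak{B}_{\alpha}(\id)$ has a half-infinite bar of the form $(a,+\infty)$ with $a$ finite, or possibly the full bar. A half-infinite bar $(a,+\infty)$ with finite $a$ has \emph{infinite} bottleneck distance to any barcode all of whose bars are of type $(-\infty,+\infty)$, since matching such a bar to a full bar incurs cost $|a-(-\infty)|=\infty$ and leaving it unmatched incurs cost equal to its (infinite) length; this contradicts Theorem~\ref{theorem:main}. The remaining case is that $\mathfrak{B}_{\alpha}(\id)$ has no half-infinite bar $(a,+\infty)$ with finite $a$ but does have at least one full bar $(-\infty,+\infty)$ carrying the unit, which again forces $\mathrm{SH}^{*}(W)\ne 0$; but a symmetric argument shows that, conversely, $\mathrm{SH}^{*}(W)=0$ implies the unit-bar is finite, i.e.\ has both endpoints finite (in fact it starts at $0$). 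Thus $\mathrm{SH}^{*}(W)\ne 0$ always produces in $\mathfrak{B}_{\alpha}(\id)$ a bar with exactly one finite endpoint, which can never be boundedly matched against a barcode with only two-sided-infinite bars. Therefore $\mathrm{dist}_{\alpha}(\id,\varphi)=\infty$, contradicting its finiteness, and we conclude $\mathrm{SH}^{*}(W)=0$.

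I expect the main obstacle to be a clean treatment of bottleneck distance between barcodes containing infinite-length bars — one must fix conventions for how half-infinite bars $(a,+\infty)$, $(-\infty,b)$, and full bars $(-\infty,+\infty)$ are allowed to be matched and what the resulting costs are, so that "a half-infinite bar has infinite distance to a full bar" is actually a theorem and not a definitional artifact. The standard convention (matching endpoints, with $\infty-\infty$ interpreted as the sup-norm of the difference of the finite endpoints, and mismatched infinities costing $+\infty$) makes this work, and I would cite \cite{usher_zhang} or \cite{chazal_et_al_structure_stability_pmod_book} and carefully state the convention in \S\ref{sec:barc-dist-funct}. The other technical point deferred to \S\ref{sec:no-half-infinite-bars} is the identification of the colimit with $\mathrm{SH}^{*}(W)$ together with the claim that the PSS-type unit represents the bar starting at $s=0$; I would treat these as established facts from \cite{ritter_TQFT,PA_spectral_diameter}.
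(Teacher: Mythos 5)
Your proposal follows the same route as the paper: the contrapositive via Lemma \ref{lemma:endpoint-spectrum} (empty spectrum forces $\mathfrak{B}_{\alpha}(\varphi)$ to consist only of full bars), Theorem \ref{theorem:main} (finite barcode distance to $\mathfrak{B}_{\alpha}(\id)$), and the argument of \S\ref{sec:no-half-infinite-bars} that $\mathrm{SH}(W)\ne 0$ forces a half-infinite unit-bar $(0,\infty)$ in $\mathfrak{B}_{\alpha}(\id)$, which cannot be matched at finite cost against full bars. The only cosmetic difference is that your case analysis distinguishing full from half-infinite unit-bars is more roundabout than necessary---the paper pins the left endpoint of the unit-bar at $0$ directly via the Morse-cohomology square---but the substance is identical.
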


In \cite{merry_ulja} it is shown that if $\varphi$ has no translated points then the full symplectic cohomology is finite-dimensional. It should be noted that there is no known example of a Liouville manifold $W$ with finite-dimensional non-zero symplectic cohomology.

This vanishing result for full symplectic cohomology appears to be deduceable by combining a result from \cite{albers_merry}, which states that \emph{Rabinowitz Floer homology} (RFH) vanishes whenever $Y$ has a contactomorphism without translated points, with a result from \cite{ritter_TQFT}, which states that RFH vanishes if and only if the full symplectic cohomology vanishes.

\subsubsection{Estimating the size of the spectrum}
\label{sec:cons-theorem-main}
The main observation is that the number of distinct values attained as endpoints of bars in $\mathfrak{B}_{\alpha}(\varphi)$ bounds from below the size of the spectrum and hence the number of translated points. We demonstrate the argument with an example.

As explained in \S\ref{sec:vanish-result-full}, $\mathfrak{B}_{\alpha}(\id)$ is never empty: there is always a bar $(0,\gamma)$ where $\gamma$ is the period of a Reeb orbit of $\alpha$ or $\gamma$ is infinite. Thus, if the barcode distance between $\mathfrak{B}_{\alpha}(\varphi)$ and $\mathfrak{B}_{\alpha}(\id)$ is less than $\gamma/2$, then $\mathfrak{B}_{\alpha}(\varphi)$ has at least one non-empty bar. In particular, we recover a result from \cite{shelukhin_contactomorphism} which states that:
\begin{equation*}
  \begin{aligned}
    2\norm{\varphi}_{\alpha}&<\text{minimal positive action of an $\alpha$-Reeb orbit}\\
                            &\hspace{2cm}\implies \varphi\text{ has a translated point}.
  \end{aligned}
\end{equation*}
The existence of translated points for $\varphi$ is unchanged if $\varphi$ is replaced by $R_{s}\circ \varphi$. Thus one can replace $2\norm{\varphi}_{\alpha}$ by $2\inf_{s}\norm{R_{s}\circ \varphi}_{\alpha}$; this latter quantity is called the \emph{$\alpha$-oscillation energy} of $\varphi$, see \cite{shelukhin_contactomorphism}; see also \cite{oh_shelukhin_anti_ci}.

Generalizing the argument yields:
\begin{prop}\label{prop:cons-1}
  Let $\mathscr{E}_{\delta}\subset \R$ be the set of endpoints of bars in $\mathfrak{B}_{\alpha}(\id)$ of length $\delta$ or more, and let $K_{\delta}(\mathscr{E})$ be the minimal number of points in a subset $\mathscr{F}\subset \R$ so that the $\delta/2$-neighborhood around $\mathscr{F}$ contains $\mathscr{E}_{\delta}$. Then $2\inf_{s}\norm{R_{s}\circ \varphi}_{\alpha}<\delta$ implies $\varphi$ has at least $K_{\delta}(\mathscr{E})$ many lengths of translated points for $\alpha$.\hfill$\square$
\end{prop}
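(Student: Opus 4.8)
The plan is to combine Theorem~\ref{theorem:main} with Lemma~\ref{lemma:endpoint-spectrum} in essentially the way the preceding example with a single Reeb orbit does, but tracking all sufficiently long bars of $\mathfrak{B}_\alpha(\id)$ simultaneously. First I would exploit the observation already recorded above that the existence (and lengths) of translated points of $\varphi$ is invariant under replacing $\varphi$ by $R_s\circ\varphi$, and that $\mathfrak{B}_\alpha(\varphi)$ and $\mathfrak{B}_\alpha(R_s\circ\varphi)$ differ only by a global shift of the persistence parameter by $-s$ (since $(R_s\circ\varphi)^{-1}\circ R_{s'} = \varphi^{-1}\circ R_{s'-s}$), which does not change the barcode distance to $\mathfrak{B}_\alpha(\id)$ once we also shift $\id$ — more carefully, the barcode distance is translation-invariant, so $\inf_s \|R_s\circ\varphi\|_\alpha$ is the relevant quantity and the whole argument reduces to the case where $\varphi$ itself realizes the infimum up to an arbitrarily small error. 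So fix $\varphi$ with $2\|\varphi\|_\alpha < \delta$, and set $\varepsilon := \|\varphi\|_\alpha < \delta/2$.

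Next I would apply Theorem~\ref{theorem:main}: the barcode distance between $\mathfrak{B}_\alpha(\varphi)$ and $\mathfrak{B}_\alpha(\id)$ is at most $\varepsilon < \delta/2$. Unpacking the definition of barcode (bottleneck) distance, there is a partial matching $\mu$ between the bars of the two barcodes such that matched bars have endpoints within $\varepsilon$ of each other and every unmatched bar has length at most $2\varepsilon < \delta$. In particular, every bar of $\mathfrak{B}_\alpha(\id)$ of length $\delta$ or more must be matched to some bar of $\mathfrak{B}_\alpha(\varphi)$ (an unmatched bar would have length $<\delta$, a contradiction), and the matched partner has both of its endpoints within $\varepsilon$ of the corresponding endpoints of the long bar in $\mathfrak{B}_\alpha(\id)$. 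Let $\mathscr{E}_\delta \subset \R$ be, as in the statement, the set of endpoints of bars of $\mathfrak{B}_\alpha(\id)$ of length $\ge\delta$, and let $\mathscr{F}$ be the set of all endpoints of bars of $\mathfrak{B}_\alpha(\varphi)$ that are matched to a long bar of $\mathfrak{B}_\alpha(\id)$. By the previous sentence, every point of $\mathscr{E}_\delta$ lies within $\varepsilon < \delta/2$ of a point of $\mathscr{F}$, i.e., the $\delta/2$-neighborhood of $\mathscr{F}$ contains $\mathscr{E}_\delta$; hence $|\mathscr{F}| \ge K_\delta(\mathscr{E})$.

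Finally I would invoke Lemma~\ref{lemma:endpoint-spectrum}: every endpoint in $\mathscr{F}$ is an endpoint of a bar of $\mathfrak{B}_\alpha(\varphi)$, hence lies in $\mathrm{Spec}_\alpha(\varphi)$, the set of lengths of translated points of $(\varphi,\alpha)$. Thus $\mathrm{Spec}_\alpha(\varphi)$ contains at least $|\mathscr{F}| \ge K_\delta(\mathscr{E})$ distinct values, which is exactly the claimed lower bound on the number of lengths of translated points. Passing back from $\varphi$ realizing the infimum up to $\epsilon$ to the genuine infimum $\inf_s\|R_s\circ\varphi\|_\alpha < \delta$ is harmless: one picks $s$ with $2\|R_s\circ\varphi\|_\alpha<\delta$, runs the argument for $R_s\circ\varphi$, and then translates lengths of translated points back by $s$.

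The main obstacle I anticipate is purely bookkeeping rather than conceptual: being careful that the bottleneck matching controls \emph{both} endpoints of each long bar (so that all of $\mathscr{E}_\delta$, not just ``half'' of it, is captured by the $\delta/2$-neighborhood of $\mathscr{F}$), and that half-infinite or infinite bars of $\mathfrak{B}_\alpha(\id)$ are handled correctly — an infinite bar has no finite endpoints contributing to $\mathscr{E}_\delta$ and its matched partner likewise need not contribute finite endpoints, so the counting is consistent, but this edge case should be stated explicitly. One should also confirm the precise convention for barcode distance being used (open versus closed intervals, and whether bars of length exactly $\delta$ are forced to be matched) matches the one in \S\ref{sec:barc-dist-funct}; with the standard convention the inequalities above are strict enough to go through.
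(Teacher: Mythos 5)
Your proposal is correct and is essentially the paper's intended argument: the paper proves the proposition only by the single-bar example in \S\ref{sec:cons-theorem-main} followed by ``generalizing the argument yields,'' and your write-up is exactly that generalization (bottleneck matching of all bars of length $\ge\delta$ via Theorem \ref{theorem:main}, spectrality of the matched endpoints via Lemma \ref{lemma:endpoint-spectrum}, and the $R_s\circ\varphi$ shift to pass to the oscillation energy). The edge cases you flag (infinite bars, strictness of the infimum in the bottleneck distance) are handled correctly and are glossed over in the paper as well.
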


\subsubsection{Existence of two translated points when symplectic cohomology vanishes}
\label{sec:exist-two-transl}

In this section, suppose the full symplectic cohomology vanishes, and the $\alpha$-oscillation energy of $\varphi$ is less than the minimal action of a Reeb orbit of $\alpha$, as above. Then $\mathfrak{B}_{\alpha}(\id)$ must have a \emph{finite} bar $(0,\gamma)$, where $\gamma$ is a positive action of a Reeb orbit. The assumption on the $\alpha$-oscillation energy implies $\mathfrak{B}_{\alpha}(R_{s}\circ \varphi)$ also has a finite bar (for some $s$), and hence $\varphi$ has two translated points, with two different lengths.

\subsubsection{Existence of infinitely many translated points on ellipsoids}
\label{sec:exist-infin-many}

Consider the Hamiltonian isotopy of $W=\C^{n}$ generated by:
\begin{equation*}
  r_{a}(z)=a_{1}^{-1}\pi\abs{z_{1}}^{2}+\dots+a_{n}^{-1}\pi\abs{z_{n}}^{2},
\end{equation*}
where $a_{1}\le \dots \le a_{n}$. It is well-known that $X_{r_{a}}$ has ideal restriction equal to the Reeb flow associated to the ellipsoid $\set{r_{a}=1}$. Let $\alpha$ be the induced contact form on the ideal boundary. Then $\mathrm{Spec}_{\alpha}(\id)$ equals $a_{1}\Z\cup \dots \cup a_{n}\Z$. Moreover, when $s$ is not in the spectrum, the Hamiltonian Floer cohomology $V_{\alpha,s}(\id)=\mathrm{HF}(R_{s})$ has a single generator whose Conley-Zehnder index equals:
\begin{equation*}
  \mathrm{CZ}(s)=n+2\textstyle\sum_{j=1}^{n} \lfloor s/a_{j}\rfloor.
\end{equation*}
This can be proved by using the autonomous system $H=sr_{a}$, which has a single non-degenerate orbit at $z=0$, to compute $\mathrm{HF}(R_{s})$. See \cite{cant_thesis} for the computation of the Conley-Zehnder index.

Since continuation maps preserve Conley-Zehnder indices, \emph{every interval} in $\R\setminus(a_{1}\Z\cup \dots \cup a_{n}\Z)$ is a bar in $\mathfrak{B}_{\alpha}(\id)$. Proposition \ref{prop:cons-1} then implies:
\begin{prop}
  Let $0<a_{1}\le \dots \le a_{n}$. If $\varphi$ is a contactomorphism of the standard contact sphere whose oscillation energy is less than $a_{1}$, and $\alpha_{a}$ is the contact form associated to the ellipsoid $\set{r_{a}=1}$, as above, then $\varphi$ has infinitely many $\alpha_{a}$-translated points, with lengths diverging to infinity.
\end{prop}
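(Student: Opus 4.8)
The plan is to feed the computation of $\mathfrak{B}_{\alpha_a}(\id)$ recorded just above into Proposition~\ref{prop:cons-1}; the real content is a number-theoretic statement about the set $S:=a_1\Z\cup\dots\cup a_n\Z$.

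Recall that $\mathfrak{B}_{\alpha_a}(\id)$ has exactly one bar for each connected component of $\R\setminus S$, namely that component, and that since $a_1\Z\subset S$ every bar has length at most $a_1$. I would first establish the following.

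\emph{Claim.} For every $\delta<a_1$ there are infinitely many integers $k\to+\infty$ with $(ka_1,\,ka_1+\delta)\cap S=\emptyset$.

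It is essential here to allow $\delta<a_1$ rather than $\delta=a_1$: one can check, e.g.\ for $(a_1,a_2,a_3)=(1,\tfrac{\pi}{\pi-1},\pi)$, that $(k,k+1)\cap S=\emptyset$ for only finitely many $k$, so the barcode need \emph{not} have infinitely many bars of the maximal length $a_1$. An interval $(ka_1,ka_1+\delta)$ as in the Claim lies in a single component of $\R\setminus S$, which is bounded below by $ka_1\in S$ and therefore has length $>\delta$; so the Claim produces, for each $\delta<a_1$, infinitely many bars of $\mathfrak{B}_{\alpha_a}(\id)$ of length $>\delta$ whose endpoints tend to $+\infty$.

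Granting the Claim, the proposition follows. The hypothesis that the $\alpha_a$-oscillation energy of $\varphi$ is less than $a_1$ means $2\inf_s\norm{R_s\circ\varphi}_{\alpha_a}<a_1$, so fix $\delta$ with $2\inf_s\norm{R_s\circ\varphi}_{\alpha_a}<\delta<a_1$. By the Claim the set $\mathscr{E}_\delta$ of endpoints of bars of $\mathfrak{B}_{\alpha_a}(\id)$ of length $\ge\delta$ is unbounded above, so no finite union of $\tfrac{\delta}{2}$-balls contains it and $K_\delta(\mathscr{E})=\infty$; Proposition~\ref{prop:cons-1} then gives that $\varphi$ has infinitely many lengths of $\alpha_a$-translated points. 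For the lengths to be unbounded above, pick once and for all an $s$ with $\mathrm{dist}_{\alpha_a}(R_s\circ\varphi,\id)\le\norm{R_s\circ\varphi}_{\alpha_a}<\delta/2$; then for each $L$ the Claim supplies a bar of $\mathfrak{B}_{\alpha_a}(\id)$ of length $>\delta$ with left endpoint $>L$, and since $\delta$ exceeds twice the barcode distance $\mathrm{dist}_{\alpha_a}(R_s\circ\varphi,\id)$ (Theorem~\ref{theorem:main}) this bar is matched, under an optimal matching realizing the barcode distance, to a bar of $\mathfrak{B}_{\alpha_a}(R_s\circ\varphi)$ with endpoints within $\delta/2$ of those of the original bar. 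By Lemma~\ref{lemma:endpoint-spectrum} those endpoints lie in $\mathrm{Spec}_{\alpha_a}(R_s\circ\varphi)=\mathrm{Spec}_{\alpha_a}(\varphi)+s$ (translated points of $\varphi$ and of $R_s\circ\varphi$ differ only by the shift $s$), so $\varphi$ has an $\alpha_a$-translated point of length $>L-\delta/2-s$; as $L$ was arbitrary the lengths are unbounded above.

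The Claim itself is the main obstacle, and I would prove it by Weyl equidistribution. Rewrite $(ka_1,ka_1+\delta)\cap S=\emptyset$ as the condition $ka_1\bmod a_j\le a_j-\delta$ for every $j$ (taking $x\bmod a_j\in[0,a_j)$; this is automatic when $a_j=a_1$). Split the indices with $a_j>a_1$ according to whether $a_j/a_1$ is rational: for the rational ones, choose $N$ with $Na_1\in a_j\Z$ for all of them, so that every $k\in N\Z$ already satisfies their constraints. For the irrational ones, the point $(ka_1\bmod a_j)_j$, as $k$ ranges over $N\Z$, equidistributes in the closure $H$ of the cyclic subgroup it generates inside $\prod_j(\R/a_j\Z)$, and one must show that $H$ meets the ``good box'' $\prod_j[0,a_j-\delta]$ in a set of positive Haar measure; a positive proportion of $k\in N\Z$ is then good, hence infinitely many with $k\to+\infty$. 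The delicate point is this positive-measure intersection: the box has the identity of $H$ on its boundary (at a corner), so one must verify that $H$ actually enters its interior rather than merely touching it. I expect this to come out precisely because $\delta<a_1$ strictly, so that every side $a_j-\delta\ge a_1-\delta>0$ of the box is bounded away from $0$ and the box is ``fat enough'' to catch $H$ — whereas at $\delta=a_1$ the box can be tangent to $H$ and miss it, exactly as in the example above. Making this positive-measure intersection rigorous, in particular in the presence of residual commensurabilities among the irrational-ratio $a_j$, is the fussiest part of the write-up.
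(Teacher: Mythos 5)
Your overall strategy --- compute $\mathfrak{B}_{\alpha_a}(\id)$, reduce the proposition to a statement about long gaps in $S=a_1\Z\cup\dots\cup a_n\Z$ tending to $+\infty$, and feed that into Proposition \ref{prop:cons-1} together with a bar-matching argument for the divergence of the lengths (that last step is fine, and is in fact spelled out more carefully than in the paper) --- is the same as the paper's. The gap is in the number theory, and you have located it yourself: your Claim is never proved. The decisive step, showing that the orbit closure $H$ actually enters the interior of the box $\prod_j[0,a_j-\delta]$ rather than merely touching it at the corner $0$, is deferred as ``the fussiest part of the write-up,'' so as it stands the proposal does not contain a proof. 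The difficulty is self-inflicted: by insisting that the free interval start exactly at $ka_1\in S$, you force the good set in the torus to be a box whose boundary contains the identity --- the one point of $H$ you actually know --- and you are then obliged to analyze the local structure of $H$ at that corner in the presence of arbitrary rational relations $\sum_j m_j/a_j\in\Z$. Your own example $(1,\tfrac{\pi}{\pi-1},\pi)$ shows the corner problem is genuine at $\delta=a_1$, so any proof of your Claim must use $\delta<a_1$ in an essential way, and that argument is missing.

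The paper sidesteps all of this by not anchoring the interval at a point of $S$. Normalizing $a_1=1$, it only needs the rotation $x_k=(k\bmod a_j)_{j\ge 2}$ of $T=\prod_{j\ge2}\R/a_j\Z$ to return infinitely often to the $\epsilon/2$-cube around $0$ --- a full neighborhood of the identity, so Poincar\'e recurrence (or pigeonhole) suffices and no analysis of the orbit closure or of its Haar measure is required. For such $k$ one has $\abs{k-d_ja_j}<\epsilon/2$ for every $j$, and then the interval $[k-1+\epsilon/2,\,k-\epsilon/2]$, of length $1-\epsilon$ and lying to the left of $k$ rather than starting at $k$, misses $S$ because consecutive multiples of each $a_j$ are at least $1$ apart. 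Since any $\delta<a_1$ suffices for the application to Proposition \ref{prop:cons-1}, sacrificing $\epsilon$ of interval length costs nothing. I suggest you either adopt this shift of the interval, which makes your Claim unnecessary, or, if you wish to keep the Claim, derive it from this recurrence statement rather than from equidistribution in a corner box.
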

\begin{proof}
  It suffices to prove that, for any $\epsilon>0$, the complement: $$\R\setminus (a_{1}\Z\cup \dots \cup a_{n}\Z)$$ contains infinitely many intervals of length greater than $a_{1}-\epsilon$. Without loss of generality, suppose $a_{1}=1$. Consider the torus $T=\R/a_{2}\Z\times \dots\times \R/a_{n}\Z$, and consider the projection of the diagonal sequence $(k,\dots,k)$, $k\in \mathbb{N}$, to a sequence $x_{k}\in T$. Since $x_{k}$ is the $k$th iterate of $(0,\dots,0)$ under a translation isometry, Poincar\'e recurrence implies $x_{k}$ eventually enters the $\epsilon/2$ cube around $(0,\dots,0)$.\footnote{The image of the $\epsilon/4$ cube eventually intersects the $\epsilon/4$ cube around $0$, and since we are iterating a translation $x$ must then lie in the $\epsilon/2$ cube around $0$.} For such $k$, we have $\abs{k-d_{j}a_{j}}< \epsilon/2$ for some positive integers $d_{j}$. In particular, the interval $[k-1+\epsilon/2,k-\epsilon/2]$ does not contain any multiples of $a_{2},\dots,a_{n}$ (since each $a_{j}\ge 1$). Since $k$ can be taken to be arbitrarily large in the conclusion of Poincar\'e recurrence, the desired result follows.
\end{proof}

\subsubsection{Spectral invariants}
\label{sec:spectral-invariants}

To a non-zero element $\mathfrak{e}\in \mathrm{SH}^{*}(W)$ one can associate a \emph{spectral invariant} for $\varphi_{t}$ as the infimal number $s$ so that $\mathfrak{e}$ lies in the image of $$\mathrm{HF}(\varphi_{t}^{-1}\circ R_{st})\to \mathrm{SH}(W).$$ Let us denote this number by $c_{\alpha}(\mathfrak{e},\varphi_{t})$. Since the bars of the form $(a,\infty)$ in $\mathfrak{B}_{\alpha}(\varphi)$ are in bijection with a basis for the full symplectic cohomology $\mathrm{SH}^{*}(W)$, the spectral invariant is always the left-endpoint of some bar $(a,\infty)$. The spectral invariant depends only on the projection $(\varphi_{1},[\varphi_{t}])$ to the universal cover.

To obtain a real-valued measurement, one should take $\mathfrak{e}$ to be non-zero in the quotient $\mathrm{SH}(W)/\Pi$ where $\Pi$ is the span of the basis elements corresponding to fully infinite bars. As an example, one can pick $\mathfrak{e}$ to be the basis element in $\mathrm{SH}(W)$ corresponding to a positive half-infinite bar.

The methods in this paper yield the following:
\begin{prop}\label{prop:lipschitz-spectral}
  Given $\varphi_{0,t},\varphi_{1,t}$, one has: $$\abs{c_{\alpha}(\mathfrak{e},\varphi_{0,t})-c_{\alpha}(\mathfrak{e},\varphi_{1,t})}\le \dist_{\alpha}((\varphi_{0,1},[\varphi_{0,t}]),(\varphi_{1,1},[\varphi_{1,t}])),$$ where the distance is measured in the universal cover. Moreover, $c_{\alpha}(\mathfrak{e},\varphi_{0,t})$ is continuous with respect to $\alpha$.
\end{prop}
\begin{prop}\label{prop:mono-spectral}
  If $\varphi_{s,t}$ is a path so that $\varphi_{s,0}=\id$ and $\varphi_{s,1}$ is positive, and $\mathfrak{e}$ is a non-zero element of the quotient $\mathrm{SH}(W)/\Pi$, then $c_{\alpha}(\mathfrak{e},\varphi_{s,t})$ is strictly increasing function of $s$.
\end{prop}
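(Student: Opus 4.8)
The plan is to show that along a positive path, the spectral invariant $c_\alpha(\mathfrak{e},\varphi_t)$ is nondecreasing by a general monotonicity property of continuation maps, and then to upgrade ``nondecreasing'' to ``strictly increasing'' using the two main results of the paper: Theorem \ref{theorem:main} (continuity in the Hofer/Shelukhin distance) and Lemma \ref{lemma:endpoint-spectrum} (endpoints of bars lie in the spectrum). First I would recall that a positive path $\varphi_t$ means, after choosing a contact form, that the generating contact Hamiltonian is positive; this gives, for $t_0 \le t_1$, a factorization $\varphi_{t_1} = \psi \circ \varphi_{t_0}$ where $\psi$ is generated by a nonnegative contact Hamiltonian, which in the Liouville-filling picture corresponds to a monotone homotopy of the Hamiltonian systems computing the relevant Floer cohomologies. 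Such a monotone homotopy induces continuation maps commuting with the persistence structure maps $\mathfrak{c}_{s,s'}$ and with the maps to $\mathrm{SH}(W)$, and these continuation maps shift the persistence parameter in the favorable direction; concretely, one gets a commuting triangle
\begin{equation*}
  \begin{aligned}
    \mathrm{HF}(\varphi_{t_1}^{-1}\circ R_s) &\longrightarrow \mathrm{HF}(\varphi_{t_0}^{-1}\circ R_{s}) \longrightarrow \mathrm{SH}(W),
  \end{aligned}
\end{equation*}
exhibiting $V_\alpha(\varphi_{t_1})$ as interleaved on one side with $V_\alpha(\varphi_{t_0})$, so that any class representable at parameter $s$ for $\varphi_{t_0}$ is representable at parameter $s$ for $\varphi_{t_1}$ (or vice versa, depending on sign conventions). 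Chasing the definition of $c_\alpha$ through this triangle yields $c_\alpha(\mathfrak{e},\varphi_{t_0}) \le c_\alpha(\mathfrak{e},\varphi_{t_1})$.

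To get strict monotonicity, I would argue by contradiction. Suppose $c_\alpha(\mathfrak{e},\varphi_{t_0}) = c_\alpha(\mathfrak{e},\varphi_{t_1}) =: c$ for some $t_0 < t_1$; by the monotonicity already established the function is then constant equal to $c$ on $[t_0,t_1]$. The value $c$ is the left endpoint of a half-infinite bar $(c,\infty)$ in $\mathfrak{B}_\alpha(\varphi_t)$ for every $t\in[t_0,t_1]$, because $\mathfrak{e}$ is nonzero in $\mathrm{SH}(W)/\Pi$ and hence corresponds to a genuine half-infinite bar (not a fully infinite one). Now I would use that a positive path is, locally in $t$, a path that moves off the discriminant locus: more precisely, for $t$ slightly larger than $t_0$, the contactomorphism $\varphi_{t_0}^{-1}\circ \varphi_t$ is generated by a strictly positive Hamiltonian on a short time interval, so $R_s \circ \varphi_{t_0}^{-1}\circ \varphi_t$ will have no discriminant point for $s$ in a punctured neighborhood of some values --- the upshot being that the spectrum $\mathrm{Spec}_\alpha(\varphi_t)$ genuinely moves with $t$ near $c$. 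Combined with Lemma \ref{lemma:endpoint-spectrum}, which pins the endpoint $c$ of the bar to a point of $\mathrm{Spec}_\alpha(\varphi_t)$, a constant left endpoint $c$ over a nondegenerate $t$-interval would force $c$ to lie in $\mathrm{Spec}_\alpha(\varphi_t)$ for all such $t$, which contradicts the motion of the spectrum under a strictly positive perturbation.

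The cleanest way to package the last point is probably quantitative rather than qualitative: use Theorem \ref{theorem:main} to get $|c_\alpha(\mathfrak{e},\varphi_{t_0}) - c_\alpha(\mathfrak{e},\varphi_t)| \le \dist_\alpha(\varphi_{t_0},\varphi_t)$ (this is exactly Proposition \ref{prop:lipshitz-spectral}), so the left endpoint varies Lipschitz-continuously; then one only needs that the \emph{endpoints of bars} cannot linger at a single value, which is where Lemma \ref{lemma:endpoint-spectrum} enters. An alternative, and perhaps the approach the author actually takes, is a direct positivity estimate: a strictly positive contact Hamiltonian $h \ge \delta > 0$ generating $\varphi_{t_0}^{-1}\circ \varphi_{t_1}$ gives a continuation map $\mathrm{HF}(\varphi_{t_1}^{-1}\circ R_s) \to \mathrm{HF}(\varphi_{t_0}^{-1}\circ R_{s'})$ with a strict inequality $s' < s$ (or $s < s'$) coming from the energy of the continuation cylinders being controlled below by $\int h > 0$, which pushes the spectral value strictly. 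I expect the main obstacle to be making this strict shift precise: one must show that the relevant continuation cylinders have energy bounded away from zero by the positivity of $h$, and that this energy bound translates into a strict, not merely weak, change in the spectral number --- i.e.\ ruling out the degenerate scenario where the continuation map lowers action but the infimum defining $c_\alpha$ is nonetheless unchanged because it is achieved ``in the limit.'' Handling this likely requires either the no-lingering-endpoints argument via Lemma \ref{lemma:endpoint-spectrum} or a careful compactness argument showing the infimum in the definition of $c_\alpha$ is attained, so that a strict energy gain yields a strict decrease.
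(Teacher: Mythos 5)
Neither of your two sketches actually closes the strictness gap, so as it stands the proposal is incomplete. Your primary argument (constant $c$ on a $t$-interval forces $c\in\mathrm{Spec}_{\alpha}(\varphi_{t})$ for all $t$, contradicting ``motion of the spectrum'') has an unjustified step: $\mathrm{Spec}_{\alpha}(\varphi_{t})$ is a \emph{set} of lengths, and while a positive path pushes each individual translated point to longer lengths, nothing prevents \emph{different} translated points from realizing the same length $c$ at different times $t$. The spectrum need not be discrete or locally finite for a general $\varphi$ and $\alpha$, so ``the spectrum genuinely moves near $c$'' does not follow from positivity of the path, and the contradiction is never reached. You correctly sense this weakness, but the fallback you propose does not repair it.

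Your ``alternative'' is indeed the paper's argument, but both obstacles you flag --- a lower bound on the energy of continuation cylinders, and attainment of the infimum defining $c_{\alpha}$ --- are red herrings; the paper needs neither. The point is soft: since $Y$ is closed and the path $t\mapsto\varphi_{1-t}^{-1}$ is strictly positive, its contact Hamiltonian is bounded below by some $\delta>0$, so $t\mapsto\varphi_{1-t}^{-1}\circ R_{s+(1-t)\delta}$ is still a positive path, from $\varphi_{1}^{-1}\circ R_{s+\delta}$ to $\varphi_{0}^{-1}\circ R_{s}$. This gives a continuation map $\mathrm{HF}(\varphi_{1}^{-1}\circ R_{s+\delta})\to\mathrm{HF}(\varphi_{0}^{-1}\circ R_{s})$ commuting with the maps to $\mathrm{SH}(W)$, and crucially the shift $\delta$ is \emph{uniform in $s$}. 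Now take $s=c_{\alpha}(\mathfrak{e},\varphi_{0})-\epsilon$, so that $\mathrm{HF}(\varphi_{0}^{-1}\circ R_{s})\to\mathrm{SH}(W)$ misses $\mathfrak{e}$; the composite from $\mathrm{HF}(\varphi_{1}^{-1}\circ R_{s+\delta})$ then misses $\mathfrak{e}$ too, whence $c_{\alpha}(\mathfrak{e},\varphi_{1})\ge c_{\alpha}(\mathfrak{e},\varphi_{0})-\epsilon+\delta$. Choosing $\epsilon<\delta$ yields strict increase. The ``degenerate scenario'' you worry about (infimum achieved only in the limit) cannot occur because $\delta$ does not depend on $\epsilon$; no compactness or energy quantization enters. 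The hypothesis that $\mathfrak{e}\ne 0$ in $\mathrm{SH}(W)/\Pi$ is used only to guarantee $c_{\alpha}(\mathfrak{e},\varphi)$ is finite, as you note.
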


The proofs are given in \S\ref{sec:lipsh-cont-spectr-proof} and \S\ref{sec:mono-spectr-proof}. It is interesting to compare these spectral invariants with other spectral invariants for contactomorphisms appearing in the literature, e.g., \cite{sandon_non_squeezing,albers_shelukhin_zapolsky,albers_merry_orderability_non_squeezing,oh_legendrian_entanglement,oh_shelukhin_anti_ci,oh_yu_1,oh_yu_2}.

\subsubsection{Orderability of ideal boundaries}
\label{sec:order-ideal-bound}
A direct consequence of Lemma \ref{sec:mono-spectr-proof} is that the ideal boundary of a Liouville manifold with non-vanishing symplectic cohomology is \emph{orderable}:
\begin{prop}
  If $Y$ is the ideal boundary of a Liouville manifold $W$ satisfying $\mathrm{SH}(W)\ne 0$, then there can be no positive contractible loop of contactomorphisms, i.e., $Y$ is orderable in the sense of \cite{ep2000}.
\end{prop}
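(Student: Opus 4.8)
The plan is to argue by contradiction, using the strict monotonicity of spectral invariants along positive paths established in Proposition \ref{prop:mono-spectral}. Suppose $\set{\varphi_{t}}_{t\in[0,1]}$ is a positive loop of contactomorphisms of $Y$, contractible in the identity component of the contactomorphism group, so that $\varphi_{0}=\varphi_{1}=\id$ as maps. Since $\Gamma$ is the universal cover of this identity component, the path $t\mapsto \varphi_{t}$ lifts uniquely to a path $t\mapsto \widetilde\varphi_{t}$ in $\Gamma$ with $\widetilde\varphi_{0}=\id$, and contractibility of the loop forces the endpoint $\widetilde\varphi_{1}$ to again be the identity of $\Gamma$. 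Positivity of a path is a pointwise condition on its contact Hamiltonian and is unaffected by passing to the cover, so $t\mapsto \widetilde\varphi_{t}$ is a positive path in $\Gamma$ which is in fact a loop based at $\id$.

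To invoke Proposition \ref{prop:mono-spectral} I need a class $\mathfrak{e}$ that is non-zero in the quotient $\mathrm{SH}(W)/\Pi$. Here I would use the hypothesis $\mathrm{SH}(W)\ne 0$ together with the unit-bar discussion of \S\ref{sec:no-half-infinite-bars}: the unit for the pair-of-pants product always represents a bar of $\mathfrak{B}_{\alpha}(\id)$ starting at $s=0$, and this bar is half-infinite --- of the form $(0,\infty)$ --- precisely because $\mathrm{SH}^{*}(W)$ does not vanish. Its left endpoint is $0\ne-\infty$, so the corresponding basis element of $\mathrm{SH}(W)$ survives in the quotient by the span $\Pi$ of the fully infinite bars; I take $\mathfrak{e}$ to be this element.

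Applying Proposition \ref{prop:mono-spectral} to the positive path $\widetilde\varphi_{t}$ and this choice of $\mathfrak{e}$ yields that $t\mapsto c_{\alpha}(\mathfrak{e},\widetilde\varphi_{t})$ is strictly increasing, so in particular $c_{\alpha}(\mathfrak{e},\widetilde\varphi_{0})<c_{\alpha}(\mathfrak{e},\widetilde\varphi_{1})$. But $\widetilde\varphi_{0}=\widetilde\varphi_{1}=\id$ in $\Gamma$, so these two numbers are equal --- a contradiction. Hence no positive contractible loop of contactomorphisms of $Y$ can exist, i.e., $Y$ is orderable. I expect the only genuinely delicate point to be the second step: one must know that non-vanishing of $\mathrm{SH}(W)$ produces a \emph{half-infinite} rather than fully infinite bar, which is exactly what the unit-bar analysis of \S\ref{sec:no-half-infinite-bars} provides; the lifting step and the appeal to Proposition \ref{prop:mono-spectral} are then routine.
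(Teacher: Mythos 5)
Your proposal is correct and follows essentially the same route as the paper: lift the contractible positive loop to a based loop in $\Gamma$, take $\mathfrak{e}$ to be the unit (non-zero in $\mathrm{SH}(W)/\Pi$ because its bar is born at $s=0$ and hence half-infinite rather than fully infinite when $\mathrm{SH}(W)\ne 0$, per \S\ref{sec:no-half-infinite-bars}), and derive the contradiction $c(0)<c(1)=c(0)$ from Proposition \ref{prop:mono-spectral}. The paper states the choice of $\mathfrak{e}$ and the lifting step more tersely; your write-up just makes those points explicit.
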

\begin{proof}
  Let $\mathfrak{e}$ be the non-zero unit, and consider $c(t)=c_{\alpha}(\mathfrak{e},\varphi_{s,t})$ where $\varphi_{s,1}$ is a positive contractible loop. Then, because the spectral invariant depends only on the projection to the universal cover, $c(1)=c(0)$. However, Proposition \ref{prop:mono-spectral} implies $c(1)>c(0)$. This completes the proof.
\end{proof}

This result has already been proved via a different method in \cite{chantraine_colin_d_rizell}, and is closely related to the work of \cite{albers_merry_orderability_non_squeezing} which establishes a similar result in the context of RFH, and \cite{merry_ulja}, which prove the ideal boundary of a Liouville domain with infinite dimensional $\mathrm{SH}$ is orderable. The relation between positive loops of contactomorphisms and symplectic cohomology groups is studied further in \cite{cant_hedicke_kilgore}.

\subsubsection{Non-orderable ideal boundaries and the boundary depth}
\label{sec:non-orderable-ideal}

As explained by Shelukhin in a private correspondence, in the case when $Y$ is non-orderable, one can use the barcode $\mathfrak{B}_{\alpha}(\id)$ to bound the minimal ``size'' of any homotopy between a contractible positive loop and the constant loop, similarly to the result \cite[Theorem 1.11]{ekp}.

Let $\zeta_{s,t}$, $s\in \R/\Z$ and $t\in [0,1]$, be a homotopy of based loops in $\Gamma$ so that $\zeta_{s,1}$ is positive and $\zeta_{s,0}$ is the constant loop at the identity. Let $h_{s,t}$ be the contact Hamiltonian generating the loop $s\mapsto \zeta_{s,t}$; see \S\ref{sec:shel-hofer-dist} for the definition of $h_{s,t}$. Following \cite[pp.\,1641]{ekp}, one defines the number:
\begin{equation*}
  \mu_{\alpha}(\zeta_{s,t})=-\min_{s,t,y}h_{s,t}(y).
\end{equation*}
Shelukhin explained that our methods imply the following estimate:
\begin{prop}\label{prop:boundary-depth-estimate}
  Pick any system $\varphi_{t}$, and let $b$ be the length of the longest bar appearing in $\mathfrak{B}_{\alpha}(\varphi)$ (i.e., the boundary depth). Let $\zeta_{s,t}$ be a null-homotopy of a positive loop $s\mapsto \zeta_{s,1}$, as above. Then:
  \begin{equation*}
    b \le \mu_{\alpha}(\zeta_{s,t}).
  \end{equation*}
  In particular, the boundary depth of $\mathfrak{B}_{\alpha}(\varphi)$ is uniformly bounded whenever $Y$ is non-orderable in the sense of \cite{ep2000}.
\end{prop}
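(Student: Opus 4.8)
\emph{Plan.} The plan is to reduce the inequality to the vanishing of a family of continuation maps and then to obtain that vanishing from a parametrized Floer continuation carried by the disk $\psi_{s,t}$, adapting the argument of \cite[Theorem 1.11]{ekp}. First, if $\mathrm{SH}(W)\ne 0$ then $Y$ is orderable by the orderability proposition (\S\ref{sec:order-ideal-bound}), so there is no contractible positive loop and no null-homotopy $\psi_{s,t}$ of the stated kind exists; the assertion is then vacuous. So assume $\mathrm{SH}(W)=0$, whence the colimit of $V_{\alpha}(\varphi)$ vanishes, every bar of $\mathfrak{B}_{\alpha}(\varphi)$ is finite, and $b$ is the ordinary boundary depth. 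With the usual convention that finite bars are half-open, $b\le\mu_{\alpha}(\psi_{s,t})$ is equivalent to the statement that $\mathfrak{c}_{\sigma_{0},\sigma_{1}}\colon V_{\alpha,\sigma_{0}}(\varphi)\to V_{\alpha,\sigma_{1}}(\varphi)$ vanishes for all $\sigma_{0}<\sigma_{1}$ in $\R\setminus\mathrm{Spec}_{\alpha}(\varphi)$ with $\sigma_{1}-\sigma_{0}>\mu_{\alpha}(\psi_{s,t})$: a bar of length exceeding $\mu_{\alpha}(\psi_{s,t})$ would make some such $\mathfrak{c}_{\sigma_{0},\sigma_{1}}$ nonzero, while conversely, if no bar is that long, every such map is zero; and Lemma \ref{lemma:endpoint-spectrum} lets one keep the $\sigma_{i}$ off the spectrum. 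Equivalently, one is exhibiting a $\tfrac{1}{2}\mu_{\alpha}(\psi_{s,t})$-interleaving of $V_{\alpha}(\varphi)$ with the zero persistence module, and the isometry theorem (\S\ref{sec:interleavings-isometry}) then gives the estimate; taking the infimum over all null-homotopies of all contractible positive loops yields the claimed uniform bound when $Y$ is non-orderable.

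\emph{Producing the vanishing.} Fix $\sigma_{0}<\sigma_{1}$ as above. The map $\mathfrak{c}_{\sigma_{0},\sigma_{1}}$ may be computed, following \S\ref{sec:cont-cylind}, along any positive path from $\varphi^{-1}\circ R_{\sigma_{0}}$ to $\varphi^{-1}\circ R_{\sigma_{1}}$ in its homotopy class rel endpoints. I would take the path that first traverses once the contractible positive loop obtained by conjugating $\psi_{1,\cdot}$ by the \emph{strict} contactomorphism $R_{\sigma_{0}}$ — conjugation by a strict contactomorphism leaves each contact Hamiltonian $h_{s,t}$, hence $\mu_{\alpha}$, unchanged — and then flows by the Reeb field from $\varphi^{-1}\circ R_{\sigma_{0}}$ to $\varphi^{-1}\circ R_{\sigma_{1}}$; since the inserted loop is contractible this path is homotopic rel endpoints to the direct Reeb path and so does compute $\mathfrak{c}_{\sigma_{0},\sigma_{1}}$. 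Passing to Hamiltonian systems on $W$ (where left-composition by $\varphi^{-1}$ affects the behaviour at infinity only through a positive conformal factor, not the slope), the disk $\psi_{s,t}$, $(s,t)\in[0,1]^{2}$, supplies a two-parameter family of Floer data capping off the inserted loop, with the edge $s=1$ carrying the loop and the edges $s=0$, $t=0$, $t=1$ constant. One now runs the energy estimate of \cite[Theorem 1.11]{ekp} in this Floer setting, in parallel with \S\ref{sec:interl-two-pers}: capping a continuation cylinder contributing to $\mathfrak{c}_{\sigma_{0},\sigma_{1}}$ with this family produces a solution of nonnegative energy whose curvature contributions cancel the action terms and are bounded above by $\mu_{\alpha}(\psi_{s,t})-(\sigma_{1}-\sigma_{0})$ — the cap contributing at most $\mu_{\alpha}(\psi_{s,t})=-\min_{s,t,y}h_{s,t}(y)$, the monotone Reeb leg contributing a negative amount of magnitude at least $\sigma_{1}-\sigma_{0}$; since $\sigma_{1}-\sigma_{0}>\mu_{\alpha}(\psi_{s,t})$ this is impossible, so the zero-dimensional moduli space is empty and $\mathfrak{c}_{\sigma_{0},\sigma_{1}}=0$.

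\emph{Main obstacle.} The delicate part is exactly the energy bookkeeping in the last step. One must choose the lifts to $W$ and the two-parameter family of Floer data so that (i) the maximum principle controlling escape to infinity holds uniformly over the square $[0,1]^{2}$ — which is where the conformal factor of $\varphi$ and the $y$-dependence of $h_{s,t}$ must be absorbed without inflating the constant — and (ii) the contribution of the capping region is bounded \emph{sharply} by $-\min_{s,t,y}h_{s,t}(y)$, with the \emph{same} normalization in which the monotone Reeb leg contributes $\sigma_{1}-\sigma_{0}$, so that the two terms cancel cleanly and no spurious proportionality constant survives. Once this is in place, the reduction of the first paragraph turns the vanishing into the stated bound on $b$, and hence into the uniform bound in the non-orderable case.
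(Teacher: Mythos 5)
Your reduction in the first paragraph is fine: the case $\mathrm{SH}(W)\ne 0$ is vacuous by orderability, and in the remaining case $b\le\mu_{\alpha}(\psi_{s,t})$ is indeed equivalent to the vanishing of $\mathfrak{c}_{\sigma_{0},\sigma_{1}}$ for all $\sigma_{1}-\sigma_{0}>\mu_{\alpha}(\psi_{s,t})$. The gap is in how you produce that vanishing. You assert that the path obtained by inserting the contractible positive loop before the Reeb leg ``does compute $\mathfrak{c}_{\sigma_{0},\sigma_{1}}$'' because it is homotopic rel endpoints to the direct Reeb path. In this non-compact setting continuation maps are only invariant under homotopies \emph{through non-negative paths}: the invariance argument of \S\ref{sec:invar-under-homot} rests on the a priori energy bound of \S\ref{sec:non-negative-paths}, which requires non-positivity (after reversal) for every parameter value of the homotopy. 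The null-homotopy $\psi_{s,t}$ is not a positive loop for intermediate $s$, so contracting the inserted loop is not an admissible homotopy, and the two paths need not induce the same map. Making the null-homotopy non-negative is precisely where $\mu_{\alpha}$ enters: one must compose with $R_{Bt}$ for $B>\mu_{\alpha}(\psi_{s,t})$, and this shifts the right endpoint by $B$ rather than giving a free identification over the same interval. If arbitrary homotopies rel endpoints preserved continuation maps, your argument would prove $b=0$, which is absurd. The second half of the step has the same problem in a different guise: ``capping'' a continuation cylinder with the disk of Floer data changes the domain and the data, so at best it yields a chain homotopy between two different operations; it cannot show that the moduli space computing the structure map $\mathfrak{c}_{\sigma_{0},\sigma_{1}}$ is \emph{empty} (that moduli space is nonempty whenever a bar passes through $\sigma_{0}$), and an energy identity for the direct Reeb path alone involves only the actions of the asymptotes, with no trace of $\mu_{\alpha}$.

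For comparison, the paper's proof avoids both issues by never trying to equate the loop-inserted path with the direct path over the same interval. It uses the positive loop $\psi_{1,t}$ (composed with $R_{-\epsilon t}$) to build a genuine wrong-way morphism $\mathfrak{c}'\colon V_{s_{0}}^{\alpha}(\varphi)\to V_{s_{0}-\epsilon}^{\alpha}(\varphi)$, and then uses the null-homotopy composed with $R_{Bt}$, $B>\mu_{\alpha}(\psi_{s,t})$, as a homotopy \emph{through positive paths with fixed endpoints} showing that $\mathfrak{c}_{s_{0}-\epsilon,s_{0}+B}\circ\mathfrak{c}'=\mathfrak{c}_{s_{0},s_{0}+B}$. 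The resulting containment of images, combined with the normal form of the barcode, rules out a bar of length greater than $B$. If you want to salvage your route, you should replace the ``homotopic rel endpoints'' claim and the emptiness claim by this factorization (or by an equivalent parametrized-moduli-space argument whose a priori energy bounds are justified for every parameter value, which again forces the $R_{Bt}$ correction and hence the shift by $B$).
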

The proof is given in \S\ref{sec:bound-bound-depth}. See \cite{AFM15,albers_merry_orderability_non_squeezing} for related results in the context of RFH.

\subsection{Additional definitions}
\label{sec:addit-defin}

In this section we review some of the concepts used in the preceding results.

\subsubsection{Shelukhin's Hofer distance for contactomorphisms}
\label{sec:shel-hofer-dist}

Let $\varphi_{s}:Y\to Y$, $s\in [0,1]$ be a contact isotopy, and define $h_{s}(\varphi_{s}(x)):=\alpha_{\varphi_{s}(x)}(\varphi_{s}'(x))$ where $\alpha$ is some choice of a contact form. In \cite{shelukhin_contactomorphism} the Hofer-type (pseudo)-norm is considered:
\begin{equation*}
  \norm{\varphi}_{\alpha}:=\inf_{\varphi_{s}}\int_{0}^{1}\max_{y} \abs{h_{s}(y)}\d t.
\end{equation*}
The infimum is taken over isotopies $\varphi_{s}$ which represent $\varphi$ in the universal cover. The associated (pseudo)-distance function is:
\begin{equation*}
  \dist_{\alpha}(\varphi_{0},\varphi_{1})=\norm{\varphi_{1}\varphi_{0}^{-1}}_{\alpha}.
\end{equation*}
Concretely, if $\varphi_{0,t}$ and $\varphi_{1,t}$ represent elements in the universal cover, one considers all extensions $\varphi_{s,t}$ so that $\varphi_{s,0}=\id$. Each such element induces a path from $\varphi_{s,1}$ joining the time $1$ maps. Differentiating the path $\varphi_{s,1}$ with respect to $s$ produces a contact Hamiltonian $h_{s}:Y\to \R$, and the distance is the minimum of $\int \max_{y}\abs{h_{s}(y)}\d t$ over all such choices $\varphi_{s,t}$.

Note that \cite[Theorem A]{shelukhin_contactomorphism} shows the pseudo-norm is non-zero on every element whose time-one map is not a lift of the identity (i.e., if we do not work in the universal cover, then the pseudo-norm is guaranteed to be a norm).

\subsubsection{Persistence modules and their barcodes}
\label{sec:barc-dist-funct}

For this purposes of this paper, a \emph{persistence module with spectrum $\Sigma$} is a functor $(V,\mathfrak{c})$ from $(\R\setminus\Sigma,\le)$, thought of as a category where there is an arrow $s\to s'$ whenever $s\le s'$, into the category of $\Z/2$-graded finite-dimensional vector spaces over the field $\Z/2$. More prosaically, to each $s\not\in \Sigma$ one associates a vector space $V_{s}$ and to each inequality $s\le s'$ one associates a linear map $\mathfrak{c}_{s,s'}:V_{s}\to V_{s'}$, in such a way that the linear maps are functorial with respect to inequalities $s\le s'\le s''$.

For every interval $[s,s']$ disjoint from $\Sigma$, $\mathfrak{c}_{s,s'}$ is required to be an isomorphism.

A \emph{barcode} $\mathfrak{B}$ is the data of a set $X$ with a map: $$(a,b):X\to [-\infty,\infty)\times (-\infty,\infty]$$ so that $a\le b$; this is interpreted as a collection of intervals $[a(x),b(x)]$ parameterized by $x\in X$. We say that $\mathfrak{B}$ has \emph{spectrum $\Sigma$} if $a$ is valued in $\set{-\infty}\cup \Sigma$ and $b$ is valued in $\set{\infty}\cup \Sigma$.

It is shown in \cite{crawley_boevey,chazal_et_al_structure_stability_pmod_book} that persistence modules with spectrum $\Sigma$ have a \emph{normal form} described by a barcode $\mathfrak{B}$ with spectrum $\Sigma$, described as follows. For $s\not\in \Sigma$, let $X_{s}\subset X$ be those elements $x$ so that $a(x)<s<b(x)$. There is a map $T_{s}:X_{s}\to V_{s}$ whose image is a ($\Z/2$-graded) basis, and so that:
\begin{enumerate}
\item $\mathfrak{c}_{s,s'}\circ T_{s}(x)=T_{s'}(x)$ if $a(x)<s\le s'<b(x)$,
\item $\mathfrak{c}_{s,s'}\circ T_{s}(x)=0$ otherwise.
\end{enumerate}
In words, the bars containing $s$ form a basis for $V_{s}$, and the structure maps $\mathfrak{c}_{s,s'}$ respect these bases. See \cite{chazal_et_al_structure_stability_pmod_book,persistence_book} for more details.

Two barcodes $\mathfrak{B}_{1},\mathfrak{B}_{2}$ are said to be \emph{within distance} $\delta$ provided there are sets $E_{1},E_{2}$ and a bijection $S_{1}\sqcup E_{1}\simeq S_{2}\sqcup E_{2}$ so that the bijection preserves $a,b$ up to $\delta$, with the requirement that we extend $a,b$ so that $a=b$ holds on $E_{i}$. In other words, after adding some number of bars of length zero, there is a matching between the bars which does not move the endpoints more than $\delta$.

The infimal $\delta$ so that $\mathfrak{B}_{1},\mathfrak{B}_{2}$ are within $\delta$ is the \emph{barcode distance} (or \emph{bottleneck distance}) between $\mathfrak{B}_{1},\mathfrak{B}_{2}$. See \cite[\S2.2]{persistence_book}.

\subsection{Further questions}
\label{sec:further-questions}

\subsubsection{Comparison with other measurements}
\label{sec:comp-with-other}
Are there estimates on the barcode $\mathfrak{B}_{\alpha}(\varphi)$ in terms of other measurements for contactomorphisms, for instance those in \cite{ep2000,sandon_integer_metric,zapolsky_cont_groups,sandon_bi_invariant,colin_sandon,fraser_polterovich_rosen,oh_legendrian_entanglement,oh_shelukhin_anti_ci,nakamura_orderable_metric,allais_arlove}?

Another direction is to analyze the dependence of $\mathfrak{B}_{\alpha}(\varphi)$ on $\alpha$ more closely. For instance, the work of \cite{vukasin_zhang,persistence_book,usher_banach_mazur} indicates one should be able to bound the distance between logarithmic versions of $\mathfrak{B}_{\alpha}(\id),\mathfrak{B}_{\alpha'}(\id)$ in terms of the ratio between contact forms $\alpha/\alpha'$. This is related to the concept of the \emph{Banach-Mazur distance} on the space of contact forms, see \cite{rosen_zhang_CBM}.

\subsubsection{Bounded length of bars}
\label{sec:bounded-length-bars}

Which Liouville manifolds and contact forms $\alpha$ have a uniform bound on the length of the non-infinite bars in $\mathfrak{B}_{\alpha}(\varphi)$? Shelukhin's argument in \S\ref{sec:non-orderable-ideal} shows that all Liouville manifolds whose ideal boundary is non-orderable in the sense of \cite{ep2000}.

\subsubsection{Subcritical contact manifold without translated points}
\label{sec:subcritical}
In \S\ref{sec:vanish-result-full} it is shown that a contactomorphism without translated points implies the vanishing of symplectic cohomology. It is known that \emph{subcritical Weinstein manifolds} have vanishing $\mathrm{SH}$; see \cite{cieliebak_handle_chord,cieliebak_eliashberg_stein}. Does the ideal contact boundary of every subcritical Weinstein manifold have a contactomorphism without a translated point for some choice of contact form?

A slight variation, is there some subcritical $W$ so that $Y=\bd W$ has a contact form $\alpha$ so that every contactomorphism has an $\alpha$-translated point? It should be noted that the author has shown in \cite{cant_sandon_conj} that $S^{2n+1}$, for $n>1$ and with its standard contact form, has a contactomorphism without a translated point.

\subsubsection{Legendrian version}
\label{sec:legendrian-version}

The experts will recognize that much of this construction can be defined in a relative setting for a Legendrian $\Lambda_{0}$ with filling $L_{0}$ according to the dictionary:
\begin{enumerate}
\item[(contact isotopy)] Legendrian isotopy $\Lambda_{t}$ starting at $\Lambda_{0}$,
\item[(discriminant point)] intersection between $\Lambda_{1}$ and $\Lambda_{0}$ itself,
\item[(translated point)] Reeb chord from $\Lambda_{0}$ to $\Lambda_{1}$,
\item[(symplectic cohomology)] wrapped Floer cohomology of $L_{0}$.
\end{enumerate}
A version of this translation appears in \cite{cant_hedicke_kilgore}.

\subsection{Acknowledgements}
\label{sec:acknowledgements}

First and foremost, the author wishes to thank Egor Shelukhin and Octav Cornea for providing valuable guidance during the preparation of this paper. The author also wishes to thank Habib Alizadeh, Marcelo Atallah, Filip Bro\'ci\'c, Jakob Hedicke, Pierre-Alexandre Mailhot, Vuka\v{s}in Stojisavljevi\'c, and Marco Mazzucchelli for insightful discussions. Thanks as well to the authors of \cite{djordjevic_uljarevic_zhang} for providing helpful feedback and comments.

\section{Persistence module associated to a contactomorphism}
\label{sec:sympl-cohom-pers}

The outline for the rest of the paper is as follows: in \S\ref{sec:floers-equat-liouv} we recall the Floer cohomology groups for a contact-at-infinity system and a complex structure satisfying some admissibility conditions. In \S\ref{sec:cont-cylind} we define the continuation maps associated to a non-negative path. The persistence module associated to a contactomorphism is defined in \S\ref{sec:defin-pers-module}. The proof of Theorem \ref{theorem:main} is completed in \S\ref{sec:proof-1}.

\subsection{Floer cohomology in Liouville manifolds}
\label{sec:floers-equat-liouv}

The space of \emph{Floer data} $\mathfrak{A}$ is the space of tuples of (i) a contact-at-infinity Hamiltonian system $\psi_{t}$, and (ii) a time-dependent almost complex structure $J_{t}$. We suppose that $\psi_{t+1}=\psi_{t}\psi_{1}$ and that:
\begin{equation*}
  \d\psi_{1}^{-1}J_{t}(\psi_{1}(w))\d\psi_{1}=J_{t+1}(w),
\end{equation*}
i.e., $J_{t}$ is \emph{twisted periodic} with respect to the time-1 map $\psi_{1}$; see \S\ref{sec:contact-at-infinity} and \S\ref{sec:admissible-complex-structure} below.

The contact-at-infinity assumption provides an ideal restriction as a contact isotopy. Let $\mathfrak{A}^{\times}\subset \mathfrak{A}$ be the space of \emph{admissible} data, namely, those data so that:
\begin{enumerate}[label=(\alph*)]
\item the ideal restriction of $\psi_{1}$ lies in $\Gamma^{\times}$, 
\item all the fixed points of $\psi$ are non-degenerate,
\item the moduli spaces used to define the Floer differential are cut transversally.
\end{enumerate}

For each choice of admissible data, there is a \emph{Floer complex} $\mathrm{CF}(\psi_{t},J_{t})$ generated by the finitely many fixed points of $\psi_{1}$. The Floer differential counts certain solutions to Floer's equation, described in \S\ref{sec:floer-cylind-twist} below.

It is notable that the vector space $\mathrm{CF}(\psi_{t},J_{t})$ depends only on $\psi_{1}$. However, various decorations (such as action filtrations, gradings, etc) depend on the system $\psi_{t}$.

\subsubsection{Contact-at-infinity systems}
\label{sec:contact-at-infinity}

A symplectic isotopy $\psi_{t}$ is said to be \emph{contact-at-infinity} provided $\psi_{t}$ commutes with the Liouville flow outside of a compact set, and the generator $X_{t+1}=X_{t}$ is a time-dependent Hamiltonian vector field. The generating Hamiltonian functions $H_{t}$ are one-homogeneous with respect to the Liouville flow, up to the addition of a function which is constant outside of a compact set.

For the purposes of analyzing the ideal restriction, one can always require that $H_{t}$ is genuinely one-homogeneous outside of a compact set; this is only a trivial requirement when $\pi_{0}(Y)\to \pi_{0}(W)$ is injective, because one can just subtract a constant.

However, even when $\pi_{0}(Y)\to \pi_{0}(W)$ is not surjective, a generating Hamiltonian $H_{0,t}$ (which may not be truly one-homogeneous) can be deformed via $H_{s,t}:=H_{0,t}+sf,$ so that $H_{1,t}$ \emph{is} one-homogeneous outside of a compact set; one takes $f$ to be constant outside of a compact set. The induced path $\varphi_{s,t}$ has a constant ideal restriction, and so standard Floer theoretic techniques imply that the Floer cohomology of $H_{0,t}$ will be isomorphic to the Floer cohomology of $H_{1,t}$. For this reason, one can assume $H_{t}$ is one-homogeneous at infinity, without any true loss of generality. 

\subsubsection{Admissible complex structures}
\label{sec:admissible-complex-structure}

The complex structures featuring in $\mathfrak{A}$ are required to be $\omega$-tame, Liouville-equivariant in the ends, and $s$-independent. See \cite[\S2]{brocic_cant} for more details. A special class of such complex structures are those of SFT-type, see \cite{BEHWZ}.

\subsubsection{Floer differential cylinders as twisted holomorphic cylinders}
\label{sec:floer-cylind-twist}

For a choice of data $(\psi_{t},J_{t})\in \mathfrak{A}$, consider the moduli space $\mathscr{M}(\psi_{t},J_{t})$ of finite-energy ``twisted'' holomorphic maps solving:
\begin{equation*}
  \left\{
    \begin{aligned}
      &w:\C\to W\\
      &\psi_{1}(w(s,t+1))=w(s,t)\\
      &\bd_{s}w+J_{t}(w)\bd_{t}w=0.
    \end{aligned}
  \right.
\end{equation*}
Recall that we require that $J_{t+1}(w)=\d\psi_{1}^{-1}J_{t}(\psi_{1}(w))\d\psi_{1}$, i.e., $J_{t}$ is twisted-periodic with respect to $\psi_{1}$.

A similar twisted holomorphic curve equation is considered in \cite[\S3]{dostoglou_salamon}. Energy is defined in \S\ref{sec:energy-estimates} below.

Admissible data $\mathfrak{A}^{\times}\subset\mathfrak{A}$ are required to cut $\mathscr{M}(\psi_{t},J_{t})$ transversally. For such data, let $\mathscr{M}_{1}(\psi_{t},J_{t})$ be the union of the 1-dimensional components.

Note however that the moduli space $\mathscr{M}(\psi_{t},J_{t})$ is independent of the choice of system $\psi_{t}$ generating $\psi_{1}=\psi$. However, given such a choice, one can associate $u(s,t)=\psi_{t}(w(s,t))$ which solves the standard Floer equation on the cylinder, and the reason for the twisted periodic condition is so that $u$ solves a smooth PDE; see \S\ref{sec:non-negative-paths} for further discussion. 

\subsubsection{Energy estimates}
\label{sec:energy-estimates}

The energy of $w$ is the integral of $\omega$ over any strip of height $1$, i.e., $\R\times [t_{0},t_{0}+1]$; these are all the same and we typically take $t_{0}=0$. The finite energy assumption implies that $w$ converges to fixed points of $\psi_{1}$ at the $s=\pm \infty$ ends of the strip. Stokes' theorem gives the formula:
\begin{equation*}
  E(w)=\int_{w(\R,1)}\psi_{1}^{*}\lambda-\lambda=f(w(+\infty))-f(w(-\infty)).
\end{equation*}
Where $\d f=\psi_{1}^{*}\lambda-\lambda$, for $f$ which is constant outside of a compact set. Such a function $f$ exists by our assumption that the generating Hamiltonian for $\psi_{t}$ can be taken to be one-homogenous up to a constant in the ends.

Consequently, all finite energy solutions (for admissible data) automatically have an a priori bounded energy.

Fix a Riemannian metric which is translation invariant in the ends. Standard bubbling analysis shows that these energy bounds imply bounds on the first derivatives. Elliptic regularity then implies bounds on the higher derivatives; see \cite[\S3.1]{brocic_cant}.

Following \cite[\S2.2.5]{brocic_cant} and \cite{merry_ulja}, a priori energy bounds also imply a priori $C^{0}$ bounds on Floer cylinders. In \S\ref{sec:maxim-princ-cont} below we establish a maximum principle for continuation cylinders, which also proves a maximum principle for the Floer cylinders appearing in the differential.

\subsubsection{The Floer differential}
\label{sec:floer-differential}

The $\R$-action on $\mathscr{M}_{1}$ by $w(s,t)\mapsto w(s+1,t)$ is free and proper. The energy bounds in \S\ref{sec:energy-estimates} imply the quotient is compact, and hence a finite set. Define:
\begin{equation*}
  d_{\mathrm{CF}}(y):=\sum \set{u(-\infty):u\in \mathscr{M}_{1}/\R\text{ and }u(+\infty)=y}.
\end{equation*}
This is the cohomological differential on $\mathrm{CF}(\psi_{t},J_{t})$.

The homology of $\mathrm{CF}(\psi_{t},J_{t})$ is denoted $\mathrm{HF}(\psi_{t},J_{t})$; continuation maps prove its isomorphism class is independent of $J_{t}$ and depends only on the ideal restriction of $\psi_{t}$; see \S\ref{sec:cont-cylind}.

\subsubsection{Supergrading}
\label{sec:supergrading}

By definition, a \emph{supergrading} of a $\Z/2$-vector space $E$ is a decomposition $E_{0}\oplus E_{1}$.

It is well-known that for any fixed point $x$ of $\psi_{1}$, one can associate a Conley-Zehnder index $\mathrm{CZ}(\gamma)\in \Z/2$; one considers the linearization along the loop $\psi_{t}(x)$. The path of symplectic isomorphisms: $$\d \psi_{t,x}:TW_{\gamma(0)}\to TW_{\gamma(t)}$$
has a well-defined Conley-Zehnder index in $\Z/2$. This index induces a supergrading on $\mathrm{CF}(\psi_{t},J_{t})$; the index formula from \cite{floer-ham,introSFT,BEHWZ,wendl-sft,cant_thesis} implies that $d_{\mathrm{CF}}$ shifts the supergrading by $1$. The homology $\mathrm{HF}(\psi_{t},J_{t})$ therefore also inherits a supergrading.

\subsection{Continuation cylinders}
\label{sec:cont-cylind}

A path in $\mathfrak{A}$ with endpoints in $\mathfrak{A}^{\times}$ sometimes induces a \emph{continuation map} between the Floer cohomologies associated to the endpoints of the path. In the non-compact setting of Liouville manifolds, a sufficient condition for the existence of a continuation map is that the ideal restriction of the path is a \emph{non-negative path} in the group of contactomorphisms; see \S\ref{sec:non-negative-paths}.

Let us call such a path in $\mathfrak{A}$ \emph{non-negative}. One can form a diagram (a small category) whose objects are elements $(\psi_{t},J_{t})$ of $\mathfrak{A}^{\times}$ and whose morphisms $(\psi_{0,t},J_{0,t})\to (\psi_{1,t},J_{1,t})$ are homotopy classes of extension $(\psi_{s,t},J_{s,t})$ so that $\psi_{s,1}$ is non-negative, $J_{s,t}$ is twisted periodic for $\psi_{s,1}$, and where composition is given by concatenation. The continuation map construction induces a functor from this diagram to the category of supergraded $\Z/2$-modules, associating each element of $\mathfrak{A}^{\times}$ to its Floer cohomology $\mathrm{HF}(\psi_{t},J_{t})$.

\subsubsection{Definition of the Floer cohomology associated to a contact isotopy}
\label{sec:defin-floer-cohom}

For a system $\varphi_{t}\in \Gamma$ with $\varphi_{1}\in \Gamma^{\times}$, consider the small category $\Delta(\varphi_{t})$ whose objects are admissible pairs $(\psi_{t},J_{t})$, where $\psi_{t}$ is a Hamiltonian system whose ideal restriction is $\varphi_{t}$. Between any two objects we declare there to be a unique morphism.

A path in the space of data $\mathfrak{A}$ whose ideal restriction remains fixed is called \emph{compactly supported}. Clearly compactly supported paths are non-negative. Compactly supported paths can be reversed, which implies the associated continuation maps are isomorphisms.

If $\psi_{0,t},\psi_{1,t}$ both have ideal restriction $\varphi_{t}$, then there is a distinguished homotopy class of compactly supported paths from $\psi_{0,t}$ to $\psi_{1,t}$, namely, those paths $\psi_{s,t}$ whose ideal restriction remains fixed at $\varphi_{t}$. The Serre fibration property implies that (i) such a lift $\psi_{s,t}$ exists and (ii) the homotopy class of $\psi_{s,1}$, in the space of compactly supported paths from $\psi_{0,1}$ to $\psi_{1,1}$, is independent of the choice of $\psi_{s,t}$ and depends only on $\varphi_{t}$; see Figure \ref{fig:serre-fibration} and \S\ref{sec:serre-fibration-prop}. 

Thus we can define a functor from $\Delta(\varphi_{t})$ to the category of supergraded vector spaces by sending $(\psi_{t},J_{t})$ to $\mathrm{HF}(\psi_{t},J_{t})$ and each morphism is sent to the continuation map associated to the distinguished homotopy class of compactly supported paths. The limit of this functor is defined to be the Floer homology $\mathrm{HF}(\varphi_{t})$. Since the diagram has a unique morphism between any two objects, the limit map $\mathrm{HF}(\varphi_{t})\to \mathrm{HF}(\psi_{t},J_{t})$ is an isomorphism for any lift $\psi_{t}$. For details on limits of arbitrary functors, the reader is referred to \cite[\S I.5.1 and \S VIII.1.4]{aluffi}.

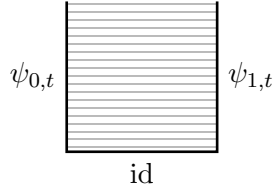
\begin{figure}[H]
  \centering
  \begin{tikzpicture}[scale=2,rotate=90]
    \path[pattern color=white!50!black,pattern={Lines[angle=45]}] (0,0) rectangle (1,1);
    \draw[line width=1pt] (1,0)--node[right](A){$\psi_{1,t}$}(0,0)--node[below](B){$\id$}(0,1)--node[left](C){$\psi_{0,t}$}(1,1);
  \end{tikzpicture}
  \caption{Defining the functor from $\Delta(\varphi_t)$ using the Serre fibration property; given $\varphi_{s,t}$, and lifts to the solid lines (i.e., $t=0$ and $s=0,1$) there is guaranteed to be some lift $\psi_{s,t}$. Evaluating at $t=1$ gives canonical homotopy class of paths from $\psi_{0,1}$ to $\psi_{0,1}$ (depending on $\varphi_{s,t}$).}
  \label{fig:serre-fibration}
\end{figure}

A final application of the Serre fibration property from \S\ref{sec:serre-fibration-prop} implies that $\mathrm{HF}(\varphi_{t})$ is a functor from the category whose objects are systems $\varphi_{t}$ with $\varphi_{1}\in \Gamma^{\times}$ and whose morphisms are homotopy classes paths $\varphi_{s,t}$ with fixed endpoints so that $\varphi_{s,1}$ is a non-negative path. Indeed, given any lifts $\psi_{0,t}$, $\psi_{1,t}$, there exists some extension to a lift $\psi_{s,t}$ whose ideal restriction is $\varphi_{s,t}$; see Figure \ref{fig:serre-fibration}. Then $\psi_{s,1}$ is a non-negative path from $\psi_{0,1}$ to $\psi_{1,1}$, inducing a continuation map $\mathrm{HF}(\psi_{0,1})\to \mathrm{HF}(\psi_{1,1})$. This continuation map is natural with respect to the choice of $\psi_{0,t},\psi_{1,t}$ and the distinguished continuation maps used to compute the limit.

In the next section \S\ref{sec:defin-pers-module}, we restrict this functor to systems of the form $\varphi_{t}^{-1}\circ R_{st}$ to define the persistence module $V_{s}(\varphi_{t})$ introduced in \S\ref{sec:persistence-module-intro}.

See \cite[\S2.2.10]{cant_hedicke_kilgore} for related discussion.

\subsubsection{Dependence on the system}
\label{sec:dependence-system}

The functoriality of $\varphi_{t}\mapsto \mathrm{HF}(\varphi_{t})$ seems to require working with the system $\varphi_{t}$ rather than its projection to the universal cover $(\varphi_{1},[\varphi_{t}])$.

As we shall show in this section, the values of this functor depend only on the projection $(\varphi_{1},[\varphi_{t}])$; however, there does not seem to be a canonical isomorphism $\mathrm{HF}(\varphi_{0,t})\to \mathrm{HF}(\varphi_{1,t})$ if $\varphi_{0,1}=\varphi_{1,1}$ and $[\varphi_{0,t}]=[\varphi_{1,t}]$.

Clearly, if $\varphi_{0,1}=\varphi_{1,1}$ and $[\varphi_{0,t}]=[\varphi_{1,t}]$, then by definition, there is some $\varphi_{s,t}$ so that $\varphi_{s,1}=\varphi_{0,1}=\varphi_{1,1}$ and $\varphi_{s,0}=\id$. By the Serre fibration property, for any lifts $\psi_{0,t}$ and $\psi_{1,t}$ to Hamiltonian systems, we can find some extension $\psi_{s,t}$ so that $\psi_{s,0}=\id$ and $\psi_{s,t}$ lifts $\varphi_{s,t}$.

In particular, the path $s\mapsto \psi_{s,1}$ is compactly supported, and hence the continuation map furnishes the desired isomorphism.

It is also important to recall the morphisms $\mathrm{HF}(\psi_{t},J_{t})\to \mathrm{SH}(W)$ is natural with respect to continuation maps associated to compactly supported path. This implies that map $\mathrm{HF}(\varphi_{t})\to \mathrm{SH}(W)$ is preserved under the above isomorphisms. This is used to show the spectral invariants $c_{\alpha}(\varphi_{t},\mathfrak{e})$ defined in \S\ref{sec:spectral-invariants} depend only on $(\varphi_{1},[\varphi_{t}])$.

\subsubsection{Topology on the space of admissible data}
\label{sec:topology-on-admissible data}

For each $(\psi_{t},J_{t})\in \mathfrak{A}$ define a basic open neighborhood to be the data of:
\begin{enumerate}
\item a compact set $\Omega\subset W$ so that $(\psi_{t},J_{t})$ is Liouville equivariant outside this compact set for each $t\in [0,1]$,
\item an open subset $V$ of $(\psi_{t},J_{t})|_{\Omega}$ in $C^{\infty}([0,1]\times \Omega)$ , and
\item an open subset $U$ of the ideal restriction of $(\psi_{t},J_{t})$ in $C^{\infty}([0,1]\times Y)$.
\end{enumerate}
The resulting open neighborhood consists of all $(\psi_{t}',J_{t}')$ so that $\psi_{t}'|_{\Omega}$ lies in $V$, is Liouville-equivariant outside of $\Omega$, and the ideal restriction lies in $U$.

These form a basis for a topology. It has the following property: if $x\mapsto \psi_{x,t}$ is a continuous map defined for $x$ in a compact space $X$, then there is a \emph{fixed} compact set $\Omega$ so that $\psi_{x,t}$ is equivariant outside $\Omega$, for all $x,t$.

Henceforth we assume all paths $\psi_{s,t}$ are continuous, satisfy $\bd_{s}\psi_{s,t}=0$ for $s$ outside a compact interval, and are infinitely differentiable in the $s$-direction.

\subsubsection{Non-negative paths and a priori energy bounds}
\label{sec:non-negative-paths}

Let $(\psi_{s,t},J_{s,t})$ be a path in $\mathfrak{A}$ so $\psi_{s,1}$ is non-negative, suppose that $\psi_{s,t},J_{s,t}$ are $s$-independent for $s$ outside of $[s_{0},s_{1}]$, and the endpoints are admissible.

By definition of $\mathfrak{A}$, we assume that $J_{s,t}$ is twisted by $\psi_{s,1}$, and hence we can form the periodic complex structure:
\begin{equation*}
  \bar{J}_{s,t}=\d\psi_{s,t}J_{s,t}\d\psi_{s,t}^{-1}, 
\end{equation*}
so that $\bar{J}_{s,t+1}=\bar{J}_{s,t}$.

When writing equations we will use $-s$ instead of $s$, since we consider the input to be the $s=+\infty$ puncture (which should correspond to the starting point of the path $\psi_{s}$).

Let $Y_{s,t},X_{s,t}$ be the generators of $\psi_{-s,t}$ with respect to $s$ and $t$. A straightforward computation shows that $w$ is $J_{-s,t}$ holomorphic if and only if the coordinate change $u(s,t)=\psi_{-s,t}(w(s,t))$ solves:
\begin{equation}\label{eq:bad-equation}
  \bd_{s}u-Y_{s,t}(u)+\bar{J}_{-s,t}(u)(\bd_{t}u-X_{s,t}(u))=0.
\end{equation}
In other words, $u$ solves an $s$-dependent Floer-type equation on the cylinder. Note that, outside of $[s_{0},s_{1}]$, $Y_{s,t}=0$ and $X_{s,t}$ equals the generators of the asymptotic systems $\psi_{s_{0},t}$ (at the right end) and $\psi_{s_{1},t}$ (at the left end).

Unfortunately, \eqref{eq:bad-equation} is not a smooth equation on the cylinder, even if $w(s,t)$ is twisted periodic, because $Y_{s,t}$ is rarely $1$-periodic in the $t$ coordinate. To obtain a smooth equation, we consider the following cut-off version for the continuation maps. First of all, assume that $\psi_{s,t}=\psi_{s,1}$ and $\psi_{s,t}=\psi_{s,0}=\id$ holds for $t$ in a neighborhood of $1$ and $0$, respectively; this is without loss of generality, since the Floer complex and differential depends only on $\psi_{s,1}$; it can be achieved by a standard time reparametrization.

Let $\mathscr{M}(\psi_{s,t},J_{s,t})$ be the moduli space of \emph{continuation cylinders} solving:
\begin{equation*}
  \left\{\begin{aligned}
    &u:\R\times \R/\Z\to W,\\
    &\bd_{s}u-\rho(t)Y_{s,t}+\bar{J}_{-s,t}(u)(\bd_{t}u-X_{s,t})=0,
  \end{aligned}\right.
\end{equation*}
where $\rho(t)$ is a cut-off function so $\rho(t)=1$ for $t\le 1-2\epsilon$ and $\rho(t)=0$ holds for $t\ge 1-\epsilon$. We suppose that $\rho'(t)\le 0$ and, whenever $\rho(t)\ne 1$, we have the equality $Y_{s,t}=Y_{s,1}$ and $X_{s,t}=0$. Note that this equation is smooth on the cylinder (since $Y_{s,t}=0$ holds for $t$ near $0$), and, moreover, agrees with the Floer differential equations for $\psi_{s_{0},t}$ and $\psi_{s_{1},t}$ for $s$ outside of $[s_{0},s_{1}]$.

The relevance of \emph{non-negativity} of $\psi_{s}$ is that it ensures an a priori energy bound:
\begin{lemma}
  There is a finite constant $C=C(\psi_{s,t},J_{s,t})$ so that:
  \begin{equation*}
    E(u)=\int \omega(\bd_{s}u-\rho(t)Y_{s,t},\bd_{t}u-X_{s,t})\d s\d t\le C,
  \end{equation*}
  holds for any $u\in \mathscr{M}(\psi_{s,t},J_{s,t})$. The bound continues to hold if one perturbs $\psi_{s,t},J_{s,t}$ in a $C^{\infty}$ small way on a compact set in $W$.
\end{lemma}
\begin{proof}
  Let $H_{s,t},K_{s,t}$ be the normalized Hamiltonian generators of $X_{s,t}$ and $Y_{s,t}$, respectively. Since $X_{s,t}$ and $Y_{s,t}$ are the vector field generators of $\psi_{-s,t}$, it follows that:
  \begin{equation}\label{eq:curvature-relation}
    -\pd{K_{s,t}}{t}+\pd{H_{s,t}}{s}+\omega(Y_{s,t},X_{s,t})=0;
  \end{equation}
  this is the well-known curvature relation for the generators of a two-parameter family of Hamiltonian diffeomorphisms; see, e.g., \cite[\S2.2.5]{cant_hedicke_kilgore} for a proof.

  We compute:
  \begin{equation*}
    E=\int u^{*}\omega+\int\rho(t)\d K_{s,t}(\bd_{t}u)-\d H_{s,t}(\bd_{s}u)+\rho(t)\omega(Y_{s,t},X_{s,t})\d s\d t.
  \end{equation*}
  Integration by parts yields:
  \begin{equation*}
    E(u)=\int u^{*}\omega-\int \rho'(t)K_{s,t}(u)+\int H_{-\infty,t}(\gamma_{-}(t))-H_{+\infty,t}(\gamma_{+}(t))\d t+\mathfrak{r},
  \end{equation*}
  where $\gamma_{\pm}$ are the asymptotic orbits and:
  \begin{equation*}
    \mathfrak{r}=-\rho(t)\pd{K_{s,t}}{t}+\pd{H_{s,t}}{s}+\rho(t)\omega(Y_{s,t},X_{s,t}).
  \end{equation*}
  By construction, $\rho(t)=1$ holds on the neighborhood where $\pd{K_{s,t}}{t}\ne 0$ and $X_{s,t}\ne 0$, and so \eqref{eq:curvature-relation} implies that $\mathfrak{r}=0$.
  
  Since $\rho'(t)\le 0$, and $K_{s,1}$ is non-positive at infinity (because it is the normalized generator for $\psi_{-s,1}$), the second term in the formula for $E$ is uniformly bounded above, by the maximum that $-\rho'(t)K_{s,1}$ achieves (which is attained on some fixed compact set). On the other hand, the remaining terms are uniformly bounded in terms of the asymptotic orbits, as desired.
\end{proof}

\subsubsection{Maximum principle for continuation cylinders}
\label{sec:maxim-princ-cont}

Suppose: $$u_{n}\in \mathscr{M}(\psi_{s,t},J_{s,t})$$ is a sequence of solutions where $\psi_{s,1}$ is a non-negative path. By \S\ref{sec:non-negative-paths}, the energy of $u_{n}$ is bounded independently of $n$.

Bubbling analysis then implies that $u_{n}$ satisfies a gradient bound, i.e., $\abs{\bd_{s}u_{n}}$ and $\abs{\bd_{t}u_{n}}$ remain bounded for any metric $g$ which is translation invariant in the symplectization end; see \cite{mcduffsalamon,brocic_cant,alizadeh-atallah-cant} for further discussion of bubbling in tame symplectic manifolds.

The maximum principle \cite[Theorem 2.4]{brocic_cant} asserts that, any sequence of solutions $u_{n}$ to the $s$-independent Floer's equation with admissible data on finite length cylinders $[a_{n},b_{n}]$ satisfying:
\begin{enumerate}
\item an energy bound $E(u_{n})\le E$,
\item a modulus bound $b_{n}-a_{n}\ge \ell$, and
\item gradient bound $\abs{\bd_{s}u}_{g}+\abs{\bd_{t}u}_{g}\le C$,
\end{enumerate}
remain in a fixed compact set $K\subset W$ depending only on $E,\ell,C$ (and the admissible data).

Apply \cite[Theorem 2.4]{brocic_cant} to these cylinders disjoint from $[s_{0},s_{1}]\times \R/\Z$ to conclude that any sequence $u_{n}$ satisfies:
\begin{equation*}
  u_{n}((-\infty,s_{0}]\times [0,1]\cup [s_{1},\infty)\times [0,1])\subset K.
\end{equation*}
However, since we have concluded the gradient bound on the entire cylinder, the central region $u_{n}([s_{0},s_{1}]\times \R/\Z)$ remains within distance $C(s_{1}-s_{0})$ from $K$. Since the metric is translation invariant in the end, it is complete, and hence the entire image $u_{n}(\R\times \R/\Z)$ remains in some fixed compact set. This completes the proof of the maximum principle for continuation cylinders.

\begin{figure}[H]
  \centering
  \begin{tikzpicture}
    \draw[dashed, every node/.style={shift={(0,-0.5)},below}] (2,0) circle (0.2 and 0.5) node{$s_{0}$} (3,0) circle (0.2 and 0.5) node{$s_{1}$};
    \draw (0,0) circle (0.2 and 0.5) coordinate (X) (5,0) circle (0.2 and 0.5) coordinate(Y);
    \path (X)+(0.2,0) arc (0:360:0.2 and 0.5) coordinate[pos=0.25] (X1) coordinate[pos=0.75](X2);
    \path (Y)+(0.2,0) arc (0:360:0.2 and 0.5) coordinate[pos=0.25] (Y1) coordinate[pos=0.75](Y2);
    \draw (X1)--(Y1) (X2)--(Y2);
  \end{tikzpicture}
  \caption{Maximum principle for continuation cylinders. The equation is translation invariant outside the interval $[s_{0},s_{1}]$.}
  \label{fig:max-principle}
\end{figure}
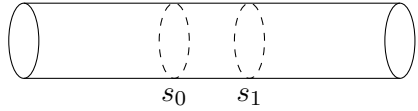

\subsubsection{Definition of the continuation map}
\label{sec:defin-cont-map}

Let $\psi_{s,t},J_{s,t}$ be a path in $\mathfrak{A}$ so that $\psi_{s,1}$ is non-negative, which we assume is $s$-independent outside of $[s_{0},s_{1}]$ and has endpoints in $\mathfrak{A}^{\times}$. One defines a continuation map by counting the rigid elements in $\mathscr{M}_{0}(\psi_{s,t},J_{s,t})$, where $\psi_{s,t}$ is perturbed to be generic on a fixed compact set so as to ensure transversality. Since $\psi_{s,t}$ is non-negative, the rigid elements in $\mathscr{M}_{0}(\psi_{s,t},J_{s,t})$ form a finite set. The continuation map $\mathfrak{c}=\mathfrak{c}(\psi_{s,t},J_{s,t}):\mathrm{CF}(\psi_{s_{0},t},J_{s_{0},t})\to \mathrm{CF}(\psi_{s_{1},t},J_{s_{1},t})$ is defined by:
\begin{equation*}
  \mathfrak{c}(x):=\sum\set{u(-\infty):u(\infty)=x\text{ and }u\in \mathscr{M}_{0}(\psi_{-s})}.
\end{equation*}
It is well-known that $\mathfrak{c}$ preserves the supergrading. As usual, examining the non-compact components in $\mathscr{M}_{1}$ proves that $\mathfrak{c}$ is a chain map with respect to the Floer differentials $d_{\mathrm{CF}}(\psi_{s_{0},1},J_{s,t})$ and $d_{\mathrm{CF}}(\psi_{s_{1},1},J_{s,t})$.

See \cite[Theorem 4]{floer-ham}, \cite[\S6]{salamon-zehnder}, \cite[pp.\ 1060]{seidel_representation} for similar definition, in the compact case.

\subsubsection{Invariance under homotopy}
\label{sec:invar-under-homot}

The argument proving that continuation maps are unchanged under deformations of the continuation data, up to chain homotopy, is well-known, see \cite[Theorem 4]{floer-ham}, \cite[Lemma 6.3]{salamon-zehnder}, and \cite[Lemma 6.13]{abouzaid_monograph}.

\subsubsection{The Serre fibration property}
\label{sec:serre-fibration-prop}
Consider the space of all Floer data and the ideal restriction map $\mathfrak{A}\to \Gamma$ as a fibration. Standard arguments involving cutting off Hamiltonians and the contractibility of spaces of almost complex structures imply this map is a Serre fibration, and, in particular, has the path lifting property. See \cite[Proposition 7]{cool_gadget}, \cite{uljarevic_floer_homology_domains,merry_ulja,ulja_drobnjak,cant_hedicke_kilgore} for related discussion.

\subsection{Definition of the persistence module}
\label{sec:defin-pers-module}

As explained in \S\ref{sec:persistence-module-intro}, the persistence module is defined as $V_{\alpha,s}(\varphi_{t})=\mathrm{HF}(\varphi_{t}^{-1}\circ R_{st})$, and the structure maps $s_{0}\le s_{1}$ is the continuation map associated to the path: $$\varphi_{s,t}=\varphi_{t}^{-1}\circ R_{(1-\beta(s))s_{0}t+\beta(s)s_{1}t},$$ where $\beta$ increases from $0$ to $1$ over the interval $[0,1]$ with $\beta'(s)\ge 0$.

\subsubsection{Dependence on the system}
\label{sec:dependence-system-1}

If $(\varphi_{1},[\varphi_{t}])=(\varphi_{1}',[\varphi_{t}'])$, then there is an induced isomorphism $V_{\alpha,s}(\varphi_{t})\to V_{\alpha,s}(\varphi_{t}')$ constructed exactly as in \S\ref{sec:dependence-system-1}; one uses a path $\varphi_{s,t}$ interpolating from $\varphi_{0,t}=\varphi_{t}$ and $\varphi_{1,t}=\varphi'_{t}$. We now show these isomorphisms commute with the structure morphisms. Consider the path:
\begin{equation*}
  \xi_{s,t}^{0}=\left\{
    \begin{aligned}
      \varphi_{\beta(s+1),t}^{-1}\circ R_{s_{0} t}\text{ for }s\le 0\\
      \varphi_{1,t}^{-1}\circ R_{s_{0}t+\beta(s)\delta t}\text{ for }s\ge 0,
    \end{aligned}
  \right.
\end{equation*}
This path equals $\xi^{0}_{s,t}=\varphi_{1,t}^{-1}\circ R_{\beta(s)\delta t}\circ \varphi_{1,t}\circ \varphi_{\beta(s+1),t}^{-1}\circ R_{s_{0}t}$, as can be checked directly. Consider the homotopy:
\begin{equation*}
  \xi_{s,t}^{\eta}=\varphi_{1,t}^{-1}\circ R_{(1-\eta)\beta(s+\eta)\delta t}\circ \varphi_{1,t}\circ \varphi_{\beta(s+1-\eta),t}^{-1}\circ R_{s_{0}t+\eta\beta(s+\eta)\delta t},
\end{equation*}
noting that, when $t=1$, this path is $\varphi_{1,1}^{-1}\circ R_{s_{0}+\beta(s+\eta)\delta}$. In particular, $\xi^{\eta}_{s,t}$ is a homotopy through non-negative paths (since $\delta\ge 0$).

Thus the continuation map associated to $\xi^{0}_{s,t}$ equals the continuation map associated to $\xi^{1}_{s,t}=\varphi_{\beta(s),t}^{-1}\circ R_{s_{0}t+\beta(s+1)\delta t}$, which is equal to the piecewise formula:
\begin{equation*}
  \xi_{s,t}^{1}=\left\{
    \begin{aligned}
      &\varphi_{0,t}^{-1}\circ R_{s_{0} t+\beta(s+1)t}\text{ for }s\le 0,\\
      &\varphi_{\beta(s),t}^{-1}\circ R_{s_{0}t+\delta t}\text{ for }s\ge 0.
    \end{aligned}
  \right.
\end{equation*}
Thus the isomorphisms commute with the structure morphisms, and the persistence module $V_{\alpha,s}(\varphi_{t})$ depends only on $(\varphi_{1},[\varphi_{t}])$ up to isomorphism. See \S\ref{sec:interl-two-pers} below for similar arguments with more details.

\subsection{Spectrality of the barcode}
\label{sec:proof-lemma-endpoint-spectrum}

Lemma \ref{lemma:endpoint-spectrum} follows from \cite{ulja_zhang}, and we defer to their paper for the details. Given a path $\varphi_{s,t}\in \Gamma$ so that $\varphi_{s,1}$ is a non-negative path without discriminant points, the idea is to construct systems $\psi_{s,t}$ \emph{with the same fixed points}, whose ideal restrictions are $\varphi_{s,t}$, and then prove that the continuation morphism acts identically on chain level.

\subsection{No half-infinite bars means no full symplectic cohomology}
\label{sec:no-half-infinite-bars}

Let $R_{s}$ be the Reeb flow for time $s$. It is well-known that there is a unit element $1\in \mathrm{HF}(R_{\epsilon t})=V_{\alpha,\epsilon}(\id)$ is mapped to the unit element of $\mathrm{SH}(W)$ under the continuation morphism.

There is a PSS ring homomorphism $H^{*}(W)\to \mathrm{HF}(R_{\epsilon t})$ from the cup product to the pair-of-pants product; see \cite[\S6]{ritter_TQFT}. Here $H^{*}(W)$ should be thought of as the Morse cohomology $\mathrm{HM}(f_{+})$ of a function which is positive and one-homogeneous at infinity.

One can also consider the Morse cohomology $\mathrm{HM}(f_{-})$ of a function $f_{-}$ which is negative and one-homogeneous at infinity; in this case one has an isomorphism $\mathrm{HM}(f_{-})\simeq H^{*}(W,\bd W)$; there is also a PSS morphism $\mathrm{HF}(R_{-\epsilon t})\to \mathrm{HM}(f_{-})$, and it is a well-known folk-theorem that the following diagram commutes:
\begin{equation*}
  \begin{tikzcd}
    {V_{\alpha,-\epsilon}(\id)}\arrow[d,"{}"]\arrow[r,"{\mathfrak{c}}"] &{V_{\alpha,\epsilon}(\id)}\arrow[from=2-2,"{}"]\\
    {\mathrm{HM}(f_{-})}\arrow[r,"{c}"] &{\mathrm{HM}(f_{+})},
  \end{tikzcd}
\end{equation*}
where $\mathfrak{c}$ is the structure morphism defined in \S\ref{sec:defin-pers-module}. For related discussion, see \cite[\S2]{cant_hedicke_kilgore}.

By picking $f_{-}$ to have no local minima, then $c:\mathrm{HM}(f_{-})\to \mathrm{HM}(f_{+})$ obviously misses the unit for index reasons, and hence $\mathfrak{c}$ misses the unit element. Indeed, the image of $c$ consists entirely of nilpotent elements with respect to the cup product. It follows that the image of $\mathrm{HF}(R_{-\epsilon t}^{\alpha})\to \mathrm{SH}(W)$ consists entirely of nilpotent elements.

Finally recall from \cite{ritter_TQFT} that $\mathrm{SH}(W)=0$ if and only if $1\in \mathrm{SH}(W)$ vanishes. Thus, if $\mathrm{SH}(W)\ne 0$, then there is a half-infinite bar corresponding to the unit which is born at $0$.

\subsection{Proof of Theorem \ref{theorem:main}}
\label{sec:proof-1}

As mentioned in \S\ref{sec:stat-main-result}, the idea is to construct an \emph{interleaving} between the persistence modules $V(\varphi_{0})$ and $V(\varphi_{1})$, and then to apply the isometry theorem relating the interleaving distance to the barcode distance. Interleavings are recalled in \S\ref{sec:interleavings-isometry}, and the proof of Theorem \ref{theorem:main} is completed in \S\ref{sec:interl-two-pers}.

\subsubsection{Interleavings and the isometry theorem}
\label{sec:interleavings-isometry}
Let $V_{s},W_{s}$ be two persistence modules. A $\delta$-interleaving is a collection of maps $V_{s}\to W_{s+\delta}$, $W_{s}\to V_{s+\delta}$, so that:
\begin{enumerate}
\item the maps commute with the structure maps, and
\item the compositions $V_{s}\to V_{s+2\delta}$ and $W_{s}\to W_{s+2\delta}$ equal the structure maps.
\end{enumerate}

The \emph{interleaving distance} between $V,W$ is the infimal $\delta$ for which there exists a $\delta$-interleaving; see \cite[\S1.3]{persistence_book}. It is an important theorem of persistence modules and their associated barcodes that:

\begin{theorem}
  The interleaving distance between persistence modules equals the barcode distance between their associated barcodes.
\end{theorem}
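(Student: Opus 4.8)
The plan is to prove the two inequalities separately, working throughout with the \emph{normal form} from \S\ref{sec:barc-dist-funct}: up to isomorphism each persistence module is a direct sum of \emph{interval modules} $I_{[a,b]}$, where $I_{[a,b]}(s)=\Z/2$ exactly when $a<s<b$ and the structure maps are the identity whenever both ends are nonzero. Since the interleaving maps of \S\ref{sec:interleavings-isometry} commute with the structure maps (and, in the Floer-theoretic applications, preserve the supergrading), the whole discussion decomposes over the $\Z/2$-grading, so one may assume the modules are ungraded.

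\textbf{Interleaving distance $\le$ barcode distance.} Assume $\mathfrak{B}_1,\mathfrak{B}_2$ are within $\delta$: after adjoining bars of length zero there is a matching of bars moving every endpoint by at most $\delta$. I would first record two elementary facts, each proved by writing the candidate shift maps explicitly (the identity on $\Z/2$ wherever source and shifted target are both supported, and zero otherwise) and verifying conditions (i)--(ii) of \S\ref{sec:interleavings-isometry}: (a) $I_{[a,b]}$ and $I_{[c,d]}$ are $\delta$-interleaved whenever $|a-c|\le\delta$ and $|b-d|\le\delta$; (b) $I_{[a,b]}$ is $\delta$-interleaved with the zero module whenever $b-a\le 2\delta$. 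In the given matching, a bar matched to a genuine bar satisfies (a), while a bar matched to an adjoined length-zero bar automatically has length at most $2\delta$, hence satisfies (b) (a length-zero interval module being the zero module). The direct sum of these interleavings is a $\delta$-interleaving of the two modules, using only the formal fact that a direct sum of $\delta$-interleavings is a $\delta$-interleaving.

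\textbf{Barcode distance $\le$ interleaving distance.} This is the \emph{algebraic stability}, or \emph{induced matching}, direction, and carries the real content. Given a $\delta$-interleaving $F_s\colon V_s\to W_{s+\delta}$, $G_s\colon W_s\to V_{s+\delta}$, I would follow \cite{bauer_lesnick}: view $F$ as a morphism $V\to W[\delta]$ to the shifted module $W[\delta]_s:=W_{s+\delta}$, factor it as a surjection $V\to \operatorname{im} F$ followed by an injection $\operatorname{im} F\to W[\delta]$, and then invoke the two basic induced-matching lemmas: a monomorphism of finite-type persistence modules induces a matching of barcodes preserving left endpoints and not decreasing right endpoints, and dually an epimorphism induces one preserving right endpoints and not increasing left endpoints. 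Composing these yields a partial matching of $\mathfrak{B}(V)$ with the $\delta$-shift of $\mathfrak{B}(W)$; applying the symmetric construction to $G$ and using the interleaving identities $G\circ F=(\text{structure map by }2\delta)$ and $F\circ G=(\text{structure map by }2\delta)$ forces every bar left unmatched to have length at most $2\delta$. Matching those to length-zero bars then shows the barcodes are within $\delta$.

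\textbf{Main obstacle, and a simplification in our setting.} The substantive step is the pair of induced-matching lemmas for mono- and epimorphisms: this is exactly where the bookkeeping of births and deaths under $F$ and $G$ takes place, and a careful treatment is in \cite{bauer_lesnick,chazal_et_al_structure_stability_pmod_book}. Our setting permits a simplification I would use to stay self-contained: the modules are finite-dimensional and their structure maps are isomorphisms across every interval disjoint from $\Sigma$, so each module is constant on the components of $\R\setminus\Sigma$; it therefore extends canonically to an $\R$-indexed, finite-type persistence module whose normal form has finitely many bars with endpoints in $\set{-\infty}\cup\Sigma\cup\set{\infty}$, so no $q$-tameness or limiting subtleties intervene and the induced matchings can be built by finite-dimensional linear algebra. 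Accordingly I would either cite \cite{cohen_steiner_edelsbrunner_harer,chazal_cohen_steiner_glisse_guibas_oudot_2009,bauer_lesnick,chazal_et_al_structure_stability_pmod_book} for the general statement, or spell out the induced-matching argument in this finite-type case.
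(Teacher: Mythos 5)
Your argument is correct and is precisely the standard proof of the isometry theorem from the references the paper cites (\cite{bauer_lesnick,chazal_et_al_structure_stability_pmod_book}); since the paper itself offers nothing beyond those citations, your sketch --- the easy direction by summing interval-module interleavings over a matching, the hard direction via the induced-matching lemmas for mono- and epimorphisms applied to the factorization of the interleaving maps --- is exactly the intended route. One caveat on your closing simplification: the barcodes arising in this paper need not be finite (the ellipsoid computation in \S\ref{sec:exist-infin-many} yields infinitely many bars), so you should invoke the pointwise-finite-dimensional versions of the structure and induced-matching theorems rather than literal finiteness of the barcode; with that adjustment the argument goes through unchanged.
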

\begin{proof}
  See \cite[\S3]{persistence_book}, \cite[\S5.4]{chazal_et_al_structure_stability_pmod_book}, and \cite{cohen_steiner_edelsbrunner_harer,chazal_cohen_steiner_glisse_guibas_oudot_2009,bauer_lesnick} for more details.
\end{proof}

\subsubsection{Interleaving the two persistence modules}
\label{sec:interl-two-pers}
Let $\varphi_{s,t}$ be a path joining systems $\varphi_{0,t}$ and $\varphi_{1,t}$. Abbreviate $\varphi_{s}=\varphi_{s,1}$. Let $\beta(s)$ be as in \S\ref{sec:defin-pers-module}, and pick $\delta$ large enough that $\varphi_{\beta(s)}^{-1}\circ R_{s_{0}+\beta(s)\delta}$ is a positive path; we will momentarily estimate how large $\delta$ needs to be. Use the continuation map defined in \S\ref{sec:defin-cont-map} to obtain a map $V_{\alpha,s_{0}}(\varphi_{0,t})\to V_{\alpha,s_{0}+\delta}(\varphi_{1,t})$. Also choose $\delta$ large enough that $\varphi_{1-\beta(s)}^{-1}\circ R_{s_{0}+\beta(s)\delta}$ is positive, thereby giving maps in the reverse direction $V_{\alpha,s_{0}}(\varphi_{1})\to V_{\alpha,s_{0}+\delta}(\varphi_{0})$.
\begin{lemma}
  If the Shelukhin-Hofer distance between $\varphi_{0},\varphi_{1}$ is smaller than~$\delta$, in the universal cover, then the isotopy $\varphi_{s,t}$ can be chosen so that above paths are positive and the above continuation maps define a $\delta$-interleaving.
\end{lemma}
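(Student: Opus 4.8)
The plan is to manufacture the isotopy $\varphi_{t}$ from a near-minimizer of Shelukhin's Hofer functional, read positivity of the comparison paths off of one contact-Hamiltonian computation, and verify the two interleaving axioms by homotopies that remain inside the class of non-negative paths. To produce the isotopy, write any isotopy from $\varphi_{0}$ to $\varphi_{1}$ as $\varphi_{t}=\eta_{t}\circ\varphi_{0}$ with $\eta_{t}$ joining $\id$ to $\varphi_{1}\varphi_{0}^{-1}$; the composition formula for contact Hamiltonians shows that the contact Hamiltonian $h_{t}$ of $\varphi_{t}$ (with respect to $\alpha$, at the moving point) equals that of $\eta_{t}$, since $\varphi_{0}$ sits on the inside and is $t$-independent. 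Combined with the symmetry $\norm{\psi}_{\alpha}=\norm{\psi^{-1}}_{\alpha}$, obtained by passing to the reversed isotopy $s\mapsto\psi_{1-s}\circ\psi_{1}^{-1}$, this identifies $\dist_{\alpha}(\varphi_{0},\varphi_{1})$ with $\inf\int_{0}^{1}\max_{y}\abs{h_{t}(y)}\,\d t$ over isotopies from $\varphi_{0}$ to $\varphi_{1}$. Since $\dist_{\alpha}(\varphi_{0},\varphi_{1})<\delta$, I select such an isotopy with $\int_{0}^{1}\max_{y}\abs{h_{t}(y)}\,\d t<\delta$ and reparametrize the time variable so that $\max_{y}\abs{h_{t}(y)}$ becomes the constant $c_{0}:=\int_{0}^{1}\max_{y}\abs{h_{u}(y)}\,\d u<\delta$, independent of $t$.

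\emph{The key computation and positivity.} Using the identity $\overline{h}_{t}=-e^{-\rho_{t}}(h_{t}\circ\varphi_{t})$ for the contact Hamiltonian of $\varphi_{t}^{-1}$ ($\rho_{t}$ the conformal factor of $\varphi_{t}$) together with the composition formula, one finds that $\sigma\mapsto\varphi_{a(\sigma)}^{-1}\circ R_{c(\sigma)}$ has contact Hamiltonian at a point $z$ equal to $e^{-\rho_{a(\sigma)}(z)}\bigl(c'(\sigma)-a'(\sigma)\,h_{a(\sigma)}(\varphi_{a(\sigma)}(z))\bigr)$; crucially the conformal factors of the two contributing terms coincide and factor out, so the path is positive exactly when $c'(\sigma)>a'(\sigma)\,h_{a(\sigma)}(w)$ for all $w$. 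Taking $c(\sigma)=s_{0}+\beta(\sigma)\delta$ (hence $c'=\beta'\delta$) with $a=\beta$, resp.\ $a=1-\beta$ (for which $\abs{a'}=\beta'$), the uniform bound $\abs{h_{t}}\le c_{0}<\delta$ yields $\beta'(\sigma)\delta>\beta'(\sigma)c_{0}\ge\abs{a'(\sigma)}\,\abs{h_{a(\sigma)}(w)}\ge a'(\sigma)\,h_{a(\sigma)}(w)$, so both paths $\varphi_{\beta(s)}^{-1}\circ R_{s_{0}+\beta(s)\delta}$ and $\varphi_{1-\beta(s)}^{-1}\circ R_{s_{0}+\beta(s)\delta}$ are positive. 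Their continuation maps (\S\ref{sec:defin-cont-map}) are the two families $V_{\alpha,s}(\varphi_{0})\to V_{\alpha,s+\delta}(\varphi_{1})$ and $V_{\alpha,s}(\varphi_{1})\to V_{\alpha,s+\delta}(\varphi_{0})$ of the candidate $\delta$-interleaving; their endpoints lie in $\Gamma^{\times}$ because the persistence modules are only defined off the spectrum, and discriminant points along the interior of a path do not obstruct the continuation map.

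\emph{The interleaving axioms.} For commutation with the structure maps, each of the two composites to be compared is a continuation map along a non-negative path $\sigma\mapsto\varphi_{a(\sigma)}^{-1}\circ R_{c(\sigma)}$ with $a$ weakly monotone (from $0$ to $1$ for the forward maps, from $1$ to $0$ for the backward maps) and with prescribed endpoints; linearly interpolating $a$ and, for each intermediate $a_{\tau}$, choosing $c_{\tau}$ with $c_{\tau}'(\sigma)\ge(\partial_{\sigma}a_{\tau})\sup_{w}h_{a_{\tau}}(w)$ is possible because the Reeb flow this forces, $\int_{0}^{1}(\partial_{\sigma}a_{\tau})(\max_{w}h_{a_{\tau}}(w))^{+}\,\d\sigma\le c_{0}$, is strictly less than the available flow $(s'-s)+\delta\ge\delta>c_{0}$, so homotopy invariance of continuation maps (\S\ref{sec:invar-under-homot}, transported to $\Gamma$ via \S\ref{sec:defin-floer-cohom}) gives axiom (1). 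For axiom (2), the composite $V_{\alpha,s}(\varphi_{0})\to V_{\alpha,s+\delta}(\varphi_{1})\to V_{\alpha,s+2\delta}(\varphi_{0})$ is a continuation map along a path whose $\varphi$-index traces the null-homotopic loop $\gamma\ast\bar{\gamma}$, where $\gamma(t)=\varphi_{t}^{-1}$; contracting this loop by the standard ``out partway then back'' homotopy, while holding $\varphi_{0}^{-1}R_{s}$ and $\varphi_{0}^{-1}R_{s+2\delta}$ fixed, is compatible with non-negativity because the Reeb flow needed is at most $c_{0}$ times the total variation of $a_{\tau}$, hence at most $2c_{0}<2\delta$, precisely the available amount; at the end of the contraction the path is purely Reeb, i.e.\ the structure map $V_{\alpha,s}(\varphi_{0})\to V_{\alpha,s+2\delta}(\varphi_{0})$. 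The $\varphi_{1}$-side of both axioms is identical with $\gamma\ast\bar\gamma$ replaced by $\bar\gamma\ast\gamma$. This produces the $\delta$-interleaving.

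\emph{Main obstacle.} The crux is the bookkeeping in the last two steps: checking that the conformal factors really do cancel in the contact-Hamiltonian computation (so that it is Shelukhin's functional, and not a conformally twisted variant, that controls positivity), and verifying that the estimates are tight on the nose --- in particular that contracting the out-and-back loop costs exactly $2\int_{0}^{1}\max_{y}\abs{h_{t}(y)}\,\d t$, matching the doubled width $2\delta$. The remaining points --- continuity in $\tau$ of the chosen homotopies and the passage to homotopy classes of non-negative paths --- are routine given the earlier sections.
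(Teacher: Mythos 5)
Your proposal is correct and follows essentially the same strategy as the paper: reparametrize a near-optimal isotopy so that the contact Hamiltonian is pointwise bounded by $\delta$, verify positivity of the paths $\sigma\mapsto\varphi_{a(\sigma)}^{-1}\circ R_{c(\sigma)}$ via the contact-Hamiltonian computation in which the conformal factors cancel, and establish both interleaving axioms by homotoping the composite paths through non-negative paths with fixed endpoints and invoking homotopy invariance of continuation maps. The only real difference is one of implementation: where you argue by a ``Reeb budget'' count that suitable interpolating families $(a_{\tau},c_{\tau})$ exist, the paper writes the homotopies down explicitly as compositions of two manifestly non-negative factors --- both are valid, and your deferred points (continuity in $\tau$ and matching the endpoints of the interpolation to the actual composite data) are indeed routine since the admissible $c$'s for fixed $a$ form a convex set.
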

\begin{proof}
  First we prove the non-negativity of the advertised paths. It suffices to prove the paths $\psi_{s}:=\varphi^{-1}_{s}\circ R_{s_{0}+s\delta}$ and $\varphi^{-1}_{1-s}\circ R_{s_{0}+s\delta}$ are non-negative for $s\in [0,1]$. One computes:
  \begin{equation*}
    \psi_{s}'=[\d\varphi_{s}^{-1}(-X_{s}+\delta R)\circ \varphi_{s}]\circ \psi_{t},
  \end{equation*}
  Inserting the generator of $\psi_{s}$ into the contact form $\varphi_{s}^{*}\alpha$ yields the quantity $(-h_{s}+\delta)\circ \varphi_{s}$; hence a sufficient criterion for positivity is that $\delta>\max_{x,s}\abs{h_{s}}$. By reparametrizing $\varphi_{s}$ in $s$ optimally, we can ensure that $\max_{x,s}\abs{h_{s}}$ is close to $\int \max_{x} \abs{h_{s}}\d s$; see, e.g., \cite[\S3]{nakamura_orderable_metric}. If $\delta$ is larger than $\dist_{\alpha}(\varphi_{0},\varphi_{1})$ then the required paths can be made non-negative (indeed, even positive). The positivity of the other path (using $\varphi_{1-s}$) follows by a symmetry argument, indeed, the contact Hamiltonian for $\varphi_{1-s}$ is $-h_{1-s}$.

  Next we prove the proposed interleaving commutes with the structure morphisms. Standard gluing arguments show the composition of a structure morphism $s_{0}\to s_{1}$ followed by the interleaving morphism is itself the continuation map of the composite path:
  \begin{equation*}
    \psi_{s,t}^{0}=\left\{
      \begin{aligned}
        &\varphi_{0,t}^{-1}\circ R_{(1-\beta(s+1))s_{0}t+\beta(s+1)s_{1}t}\text{ for }s\le 0,\\
        &\varphi_{\beta(s),t}^{-1}\circ R_{s_{1}t+\beta(s)\delta t}\text{ for }s\ge 0.\\
      \end{aligned}
    \right.
  \end{equation*}
  See Figure \ref{fig:cutoff-beta} for the graph of $\beta(s)$.
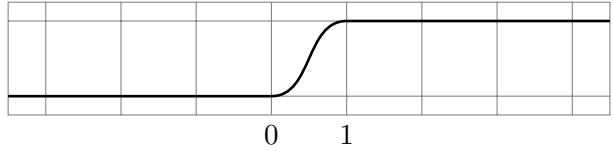
\begin{figure}[H]
    \centering
    \begin{tikzpicture}
      \draw[black!50!white] (-3.5,-0.25)grid(4.5,1.25) (-3.5,-0.25)rectangle(4.5,1.25);
      \draw[line width=1pt] (-3.5,0)--(0,0)to[out=0,in=180](1,1)--+(3.5,0);
      \node at (0,-0.25) [below]{$0$};
      \node at (1,-0.25) [below]{$1$};
    \end{tikzpicture}
    \caption{Cut-off function $\beta(s)$.}
    \label{fig:cutoff-beta}
  \end{figure}
  
  A bit of thought shows that $\psi^{0}_{s,t}$ is equal to:
  \begin{equation}\label{eq:interleaving_A}
    \psi_{s,t}^{0}=\varphi_{\beta(s),t}^{-1}R_{s_{0}t+\beta(s)\delta t +\beta(s+1)(s_{1}-s_{0})t}.
  \end{equation}
  The morphism obtained by composing the morphisms in the opposite order is the continuation morphism associated to the path:
  \begin{equation*}
    \psi_{s,t}^{1}=\left\{
      \begin{aligned}
        &\varphi_{\beta(s+1),t}^{-1}\circ R_{s_{0}t+\beta(s+1)\delta t}\text{ for }s\le 0,\\
        &\varphi_{1,t}^{-1}\circ R_{(1-\beta(s))s_{0}t+\beta(s)s_{1}t+\delta t}\text{ for }s\ge 0,\\
      \end{aligned}
    \right.
  \end{equation*}
  and this equals:
  \begin{equation}\label{eq:interleaving_B}
    \psi_{s,t}^{1}=\varphi_{1,t}^{-1}\circ R_{\beta(s)(s_{1}-s_{0})t}\circ \varphi_{1,t}\circ \varphi_{\beta(s+1),t}^{-1}\circ R_{s_{0}t+\beta(s+1)\delta t}.
  \end{equation}
  As explained in \S\ref{sec:invar-under-homot}, the two compositions are equal provided we can homotope \eqref{eq:interleaving_A} to \eqref{eq:interleaving_B} through non-negative paths with fixed endpoints (bearing in mind that the endpoints are $s=-\infty$ and $s=+\infty$).

  Consider the family of paths $\psi^{\eta}_{s}$, parameterized by $\eta \in [0,1]$, given by:
  \begin{equation*}
    \underbrace{\varphi_{1}^{-1}\circ R_{\eta\beta(s+1-\eta)(s_{1}-s_{0})}\circ \varphi_{1}}_{\text{non-negative}}\circ \underbrace{\varphi_{\beta(s+\eta)}^{-1}\circ R_{s_{0}+\beta(s+\eta)\delta+(1-\eta)\beta(s+1-\eta)(s_{1}-s_{0})}}_{\text{non-negative}},
  \end{equation*}
  where we suppress the dependence on $t$ in the notation (i.e., the notation sets $t=1$).
  
  When $\eta=0$ this path equals \eqref{eq:interleaving_A}, while when $\eta=1$ it equals \eqref{eq:interleaving_B}. The endpoints of the path remain fixed at $\varphi_{0}^{-1}\circ R_{s_{0}}$ and $\varphi_{1}^{-1}\circ R_{s_{1}+\delta}$. Moreover, the non-negativity of the two paths ensures that $\psi_{s}^{\eta}$ is a deformation through non-negative paths.

  Thus the proposed interleaving indeed commutes with the structure maps.

  Next we prove that the composition of the two interleaving maps is the structure map $V_{s_{0}}\to V_{s_{0}+2\delta}$. The argument is similar to the above, and we will again suppress the dependence on $t$ from the notation by setting $t=1$. The composition of the two interleavings is given by the piecewise formula:
  \begin{equation*}
    \psi_{s}^{1}=\left\{
      \begin{aligned}
        &\varphi_{\beta(s+1)}^{-1}\circ R_{s_{0}+\beta(s+1)\delta}\text{ for }s\le 0.\\
        &\varphi_{1-\beta(s)}^{-1}\circ R_{s_{0}+\delta+\beta(s)\delta}\text{ for }s\ge 0,\\
      \end{aligned}
    \right.
  \end{equation*}
  Similarly to the above, this is given by the formula:
  \begin{equation*}
    \psi_{s}^{1}=\varphi_{1-\beta(s)}^{-1}\circ R_{\beta(s)\delta}\circ \varphi_{1}\circ \varphi_{\beta(s+1)}^{-1}\circ R_{s_{0}+\beta(s+1)\delta}.
  \end{equation*}
  Consider the deformation:
  \begin{equation*}
    \psi^{\eta}_{s}=\underbrace{\varphi_{\eta(1-\beta(s))}^{-1}\circ R_{\beta(s)\delta}}_{\text{non-negative}}\circ\varphi_{\eta}\circ \underbrace{\varphi_{\eta\beta(s+\eta)}^{-1}\circ R_{s_{0}+\beta(s+\eta)\delta}}_{\text{non-negative}}.
  \end{equation*}
  where $\eta=1$ we recover $\psi^{1}_{s}$, while when $\eta=0$ we recover $\varphi^{-1}_{0}\circ R_{s_{0}+\beta(s)2\delta}$ whose associated continuation map is the structure map, by definition. It is clear that this path remains non-negative, and has fixed endpoints. The desired result follows from the invariance of the continuation map under deformations, as above.
\end{proof}
Applying the interleaving isometry theorem from \S\ref{sec:interleavings-isometry} completes the proof of Theorem \ref{theorem:main}.

\subsubsection{Lipschitz continuity of spectral invariants}
\label{sec:lipsh-cont-spectr-proof}
The Lipschitz continuity of $c_{\alpha}(\varphi,\mathfrak{e})$ with respect to $\dist_{\alpha}$ (with $\alpha$ fixed) follows from \S\ref{sec:interl-two-pers}. Indeed, for any $\delta>\dist_{\alpha}(\varphi_{0},\varphi_{1})$, we have constructed a sequence:
\begin{equation*}
  \begin{tikzcd}
    {V_{\alpha,s-\delta}(\varphi_{1,t})}\arrow[r,"{}"]&{V_{\alpha,s}(\varphi_{0,t})}\arrow[r,"{}"]&{V_{\alpha,s+\delta}(\varphi_{1,t})}\arrow[r,"{}"]&{\mathrm{SH}(W)},
  \end{tikzcd}
\end{equation*}
using continuation maps. Picking $s$ nearby $s_{0}=c_{\alpha}(\varphi_{0},\mathfrak{e})$, both slightly larger and slightly smaller, and considering when $\mathfrak{e}$ lies in the image of the map to $\mathrm{SH}$ yields:
\begin{equation*}
  c_{\alpha}(\varphi_{0},\mathfrak{e})-\delta  \le c_{\alpha}(\varphi_{1},\mathfrak{e})\le c_{\alpha}(\varphi_{0},\mathfrak{e})+\delta.
\end{equation*}
Taking $\delta\to \dist_{\alpha}(\varphi_{0},\varphi_{1})$ completes the first part of Proposition \ref{prop:lipschitz-spectral}.

The continuity with respect to variations in the contact form can be proved as follows: for any $\delta>0$ and any pair $(s_{0},\alpha)$, one can find a $C^{1}$ neighborhood $U$ of $\alpha$ and an open interval $I$ around $s_{0}$ so that the following holds for all $\beta\in U$ and $s\in I$:
\begin{enumerate}
\item $\dist_{\beta}(\varphi_{1},\varphi_{2})<2\dist_{\alpha}(\varphi_{1},\varphi_{2})$, and,
\item There exists a positive path from $R_{s-\delta}^{\beta}$ to $R_{s}^{\alpha}$ to $R^{\beta}_{s+\delta}$.
\end{enumerate}
For fixed $\varphi_{0,t}$, (ii) yields a diagram of continuation morphisms:
\begin{equation*}
  \begin{tikzcd}    {V_{\beta,s-\delta}(\varphi_{0,t})}\arrow[r,"{}"]&{V_{\alpha,s}(\varphi_{0,t})}\arrow[r,"{}"]&{V_{\beta,s+\delta}(\varphi_{0,t})}\arrow[r,"{}"]&{\mathrm{SH}(W)}.
  \end{tikzcd}
\end{equation*}
Pick $s_{0}=c_{\alpha}(\varphi_{0},\mathfrak{e})$, then by taking $s<s_{0}$ we conclude that the composite of all three maps does not have $\mathfrak{e}$ in its image, which implies $c_{\beta}(\varphi_{0},\mathfrak{e})\ge s-\delta$. Then take $s\to s_{0}$ to conclude $c_{\beta}(\varphi_{0},\mathfrak{e})\ge c_{\alpha}(\varphi_{0},\mathfrak{e})-\delta$. Similarly take $s>s_{0}$ to conclude: $$c_{\beta}(\varphi_{0},\mathfrak{e})\le c_{\alpha}(\varphi_{0})+\delta.$$

Finally, if $\varphi_{1}$ is nearby $\varphi_{0}$, use the Lipschitz-continuity to estimate for $\beta\in U$ that:
\begin{equation*}
  \abs{c_{\beta}(\varphi_{1},\mathfrak{e})-c_{\alpha}(\varphi_{0},\mathfrak{e})}\le 2\dist_{\alpha}(\varphi_{0},\varphi_{1})+\delta.
\end{equation*}
The second part of Proposition \ref{prop:lipschitz-spectral} follows. Moreover we have proved that $c_{\alpha}(\varphi,\mathfrak{e})$ is continuous as a function of two variables $(\alpha,\varphi)$, when $\alpha$ is endowed with the $C^{1}$ topology and $\varphi$ with the topology induced by $\dist_{\alpha}$.

\subsubsection{Monotonicity of spectral invariants}
\label{sec:mono-spectr-proof}

Suppose that there is a path $\varphi_{s,t}$ with $\varphi_{0,1},\varphi_{1,1}\in \Gamma^{\times}$ so $\varphi_{s,1}$ is positive. Then the path $\varphi_{1-s}^{-1}$ is positive, and hence one can find a small $\delta>0$ so that the path $\varphi_{1-s}^{-1}\circ R_{s_{0}+(1-s)\delta}$ is positive. Therefore the following diagram commutes, where the morphisms are continuation maps:
\begin{equation*}
  \begin{tikzcd}
    {V_{\alpha,s_{0}+\delta}(\varphi_{1,t})}\arrow[r,"{}"] &{V_{\alpha,s_{0}}(\varphi_{0,t})}\arrow[r,"{}"]&{\mathrm{SH}(W)},
  \end{tikzcd}
\end{equation*}
Take $s_{0}=c_{\alpha}(\varphi_{0},\mathfrak{e})-\epsilon$, so that the second map does \emph{not} have $\mathfrak{e}$ in its image. It follows that the composite map also does not have $\mathfrak{e}$ in its image, and hence: $$c_{\alpha}(\varphi_{1},\mathfrak{e})>c_{\alpha}(\varphi_{0},\mathfrak{e})-\epsilon+\delta;$$
by taking $\epsilon<\delta$, we conclude Proposition \ref{prop:mono-spectral}.

\subsubsection{Bounding the boundary depth}
\label{sec:bound-bound-depth}

We prove Proposition \ref{prop:boundary-depth-estimate}. First, let $B$ be some number so that $s\mapsto R_{Bs}\circ \zeta_{s,t}$ is positive for each $t$. Differentiating the path with respect to $s$ shows that the condition:
\begin{equation*}
  B+\min_{s,t,z}h_{s,t}(z)>0
\end{equation*}
is necessary and sufficient. Thus one can take $B$ slightly larger than the measurement $\mu_{\alpha}(\zeta_{s,t})$ introduced in \S\ref{sec:order-ideal-bound}. The goal is to prove that $b\le B$, where $b$ is the boundary depth of $\mathfrak{B}_{\alpha}(\varphi)$.

Pick $\epsilon>0$ small enough so that $s\mapsto R_{-\epsilon s}\circ \zeta_{s,1}$ is positive (this is possible since $s\mapsto \zeta_{s,1}$ is positive). Similarly to the arguments in \S\ref{sec:interl-two-pers}, consider the positive path:
\begin{equation*}
  \xi_{s,t}:=\left\{
    \begin{aligned}
      &\varphi^{-1}_{t}\circ R_{s_{0}t}\circ R_{-\epsilon \beta(s+1)t}\circ \zeta_{\beta(s+1),t}\text{ for }s\le 0,\\
      &\varphi^{-1}_{t}\circ R_{s_{0}t-\epsilon t+(B+\epsilon)\beta(s)t}\text{ for }s\ge 0.
    \end{aligned}
  \right.
\end{equation*}
Gluing analysis implies that the associated continuation map is the composition of the continuation map for the first path with the continuation map for the second path. Observe that:
\begin{equation*}
  \xi_{s,t}=\varphi_{t}^{-1}\circ R_{s_{0}t+(B+\epsilon)\beta(s)t-\epsilon\beta(s+1)t}\circ \zeta_{\beta(s+1),t}.
\end{equation*}
By interpolating $s+1$ to $s$, one sees $\xi_{s,t}$ is homotopic to the modified path: $$\xi_{s,t}=\varphi_{t}^{-1}\circ R_{s_{0}t+B\beta(s)t}\circ \zeta_{\beta(s),t},$$ with fixed endpoints, so that $\xi_{s,1}$ remains positive. Finally, one considers the homotopy:
\begin{equation*}
  \xi_{s,t}^{\eta}=\varphi_{t}^{-1}\circ R_{s_{0}t+B\beta(s)t}\circ \zeta_{\beta(s),\eta t},
\end{equation*}
where $\eta$ goes from $1$ to $0$; by assumption on $B$, $\xi_{s,1}^{\eta}$ remains a positive path, and this homotopy terminates at $\xi_{s,t}^{0}=\varphi_{t}^{-1}\circ R_{s_{0}t+B\beta(s)t}$. By the invariance of the continuation morphism under homotopies through non-negative paths, the following diagram commutes:
\begin{equation}\label{eq:diagram-commutes-boundary-depth}
  \begin{tikzcd}
    {V_{s_{0}}^{\alpha}(\varphi_{t})}\arrow[rd,swap,"{\mathfrak{c}_{s_{0},s_{0}+B}}"]\arrow[r,"{\mathfrak{c}'}"] &{V_{s_{0}-\epsilon}^{\alpha}(\varphi_{t})}\arrow[d,"{\mathfrak{c}_{s_{0}-\epsilon,s_{0}+B}}"]\\
    &{V_{s_{0}+B}^{\alpha}(\varphi_{t})},
  \end{tikzcd}
\end{equation}
where $\mathfrak{c}'$ is the continuation map associated to $\xi_{s,t}$ restricted to $s\in [-1,0]$. The diagonal morphism is generated by $\xi^{0}_{s,t}$ and the composition of the vertical and horizontal morphisms is generated by $\xi_{s,t}^{1}$.

If $B<b$, then one can find some $s_{0}$ so that the image of $\mathfrak{c}_{s_{0},s_{0}+B}$ is strictly larger than the image of $\mathfrak{c}_{s_{0}-\epsilon,s_{0}+B}$; one can pick some bar close to maximal length and take $s_{0}$ to be slightly to the right of the start of this bar, so $s_{0}-\epsilon$ lies to the left of the bar, and so $s_{0}+B$ is still inside the bar; see Figure \ref{fig:boundary-depth-trick}.

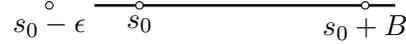
\begin{figure}[H]
  \centering
  \begin{tikzpicture}[xscale=2]
    \draw[line width=1pt] (0,0)--(2,0);
    \path[every node/.style={draw,circle,fill=white,inner sep=1pt}] (-0.3,0)node{}--(0.3,0)node{}--(1.8,0)node{};
    \path[every node/.style={below}] (-0.3,0)node{$s_{0}-\epsilon$}--(0.3,0)node{$s_{0}$}--(1.8,0)node{$s_{0}+B$};
  \end{tikzpicture}
  \caption{Choosing $s_0$ if $B$ is smaller than the length of the longest bar.}
  \label{fig:boundary-depth-trick}
\end{figure}

This contradicts the commutative diagram \eqref{eq:diagram-commutes-boundary-depth}, and hence $B\ge b$, completing the proof of Proposition \ref{prop:boundary-depth-estimate}.

\bibliographystyle{alpha}
\bibliography{citations}
\end{document}